\newtheorem{thm}{Theorem}[section]
\newtheorem{lem}[thm]{Lemma}
\newtheorem{cor}[thm]{Corollary}
\def\a{\alpha}
 \def\O{\Omega}
\def\og{\overline{G}}
\def\di{\bigm|} \def\lg{\langle} \def\rg{\rangle}
\def\nd{\mathrel{\bigm|\kern-.7em/}}
\def\Aut{\hbox{\rm Aut\,}}
\def\A{\mathcal{A}$$}
\begin{document}
\title{A classification of finite $p$-groups with a unique $\mathcal{A}_2$-subgroup\thanks{This
research was partially supported by the National Natural Science Foundation of China (nos. 11771258, 11971280)}}

\author{Jixia Gao$^{1}$, Dandan Zhang$^{1}$, Haipeng Qu$^{1,2}$\thanks{corresponding author}\\
\footnotesize\em $1$. School of Mathematics and Statistics, Lanzhou University,\\
\footnotesize\em Lanzhou Gansu 730000, P. R. China\\
\footnotesize\em e-mail: orcawhale@163.com\\
\footnotesize\em $2.$ School of Mathematics and Computer Science, Shanxi Normal University,\\
\footnotesize\em Taiyuan Shanxi 030032, P. R. China}
\date{}
\maketitle

\begin{abstract}
Finite $p$-groups with a unique $\mathcal{A}_2$-subgroup are classified  up to isomorphism. A problem proposed by Berkovich and Janko is solved.

\medskip
\noindent{\bf Keywords}  Finite $p$-groups, $\mathcal{A}_t$-groups, $\mathcal{A}_2$-subgroups

\medskip
\noindent{\bf 2020MSC} 20D15
\end{abstract}

\baselineskip=16pt

\section{Introduction}

In this paper, $p$ is a prime, all groups considered are finite $p$-groups (in brief, $p$-groups), where $p$-groups are the groups of prime power order. A finite group $G$ is said to be minimal non-abelian if $G$ is non-abelian but all its proper subgroups are abelian. Berkovich and Janko in their long paper \cite{At} introduced a more general concept than that of minimal non-abelian $p$-groups.
A finite non-abelian $p$-group is called an $\mathcal{A}_t$-group, $t$ is a positive integer, if it has a non-abelian subgroup of index $p^{t-1}$ but all its subgroups of index $p^t$ are abelian. Obviously, $\mathcal{A}_1$-groups are exactly minimal non-abelian $p$-groups, which were classified by R${\rm\acute{e}}$dei in \cite{A1}. For $t=2,3$, $\mathcal{A}_t$-groups were classified by Zhang et al. in \cite{A2,A3}, respectively. Obviously, given a non-abelian $p$-group $G$, there is a $t\in \mathbb{N}$ such that $G$ is an $\mathcal{A}_t$-group. Hence the study of finite non-abelian $p$-groups can be regarded as that of $\mathcal{A}_t$-groups for
some $t\in \mathbb{N}$. We use $G\in \mathcal{A}_t$ to denote $G$ is an $\mathcal{A}_t$-group.

As numerous results show, the structure of a $p$-group depends essentially on its $\mathcal{A}_1$-subgroups. Indeed, the structure of a $p$-group also depends essentially on the $\mathcal{A}_2$-subgroups contained there. Recently, some scholars studied the structure of $p$-groups by imposing restrictions on their $\mathcal{A}_2$-subgroups, see \cite{BerZhang,Zhang DD,ZZ}. In this paper, we study the structure of $p$-groups by imposing restriction on the number of their $\mathcal{A}_2$-subgroups.

Following the notation of \cite{BJI}, $\alpha_k(G)$ denotes the number of $\mathcal{A}_k$-subgroups of a non-abelian $p$-group $G$.
An and Qu et al. in a series of papers \cite{Z4,Z2,Z1,Z3,Z5} classified the $p$-groups with an $\mathcal{A}_1$-subgroup of index $p$. Naturally, we wish to classify the $p$-groups with an $\mathcal{A}_2$-subgroup of index $p$. Such a class of $p$-groups contains at least $\mathcal{A}_3$-groups as its subclass. Hence such a class of $p$-groups is quite large, and this classification is an enormous work!
In this paper, we classify the $p$-groups with a unique $\mathcal{A}_2$-subgroup. In other words, we classify the $p$-groups with $\alpha_2(G)=1$. Thus \cite[Problem 895]{B2} are solved.
Due to the classification of $\mathcal{A}_3$-groups and the classification of
the $p$-groups obtained by An and Qu et al., the classification of the $p$-groups with a unique $\mathcal{A}_2$-subgroup is completed in a very short length.

\section{Preliminaries}

Let $G$ be a finite $p$-group. We use $c(G)$, $\exp(G)$ and $d(G)$ to denote the nilpotency class, the exponent and the minimal cardinality of generating set of $G$, respectively. We use ${\rm C}_{p^n}$ and ${\rm C}_{p^n}^m$ to denote the cyclic group of order $p^n$ and the direct product of $m$ copies of ${\rm C}_{p^n}$, respectively. The group $G$ is the central product,
$G = A \ast B$, of the two subgroups $A$ and $B$ if $G = AB$ and $[A, B] = 1$. 

For any positive integer $s$, we define
$$\Omega_s(G)=\langle a\in G\mid a^{p^s}=1\rangle~ {\rm and}~  \mho_s(G)=\langle a^{p^s}\mid a\in G\rangle.$$
The Frattini subgroup $\Phi(G)$ of $G$ is  equal to $G'\mho_1(G)$.

We use ${\rm M}_p(n,m)$ to denote the $p$-groups
$$\langle a,b\di a^{p^n}=b^{p^m}=1,~[a,b]=a^{p^{n-1}}\rangle,~{\rm where}~n\geqslant2,~m\geqslant1. $$

We use ${\rm M}_p(n,m,1)$ to denote the $p$-groups
$$\langle a,b,c\di a^{p^n}=b^{p^m}=c^p=1,~[a,b]=c,~[c,a]=[c,b]=1\rangle,$$
${\rm where}~n\geqslant m\geqslant1,~{\rm and~ if}~ p=2,~{\rm then}~m+n\geqslant3$.


For any undefined notations and terminologies, the reader is referred to \cite{BJI} .

\begin{lem}\label{l1}{\rm (\cite[Lemma 2.2]{Zhang2})}
Let $G$ be a finite $p$-group. Then the following propositions are equivalent$:$

$(1)$ $G$ is a minimal non-abelian $p$-group$;$

$(2)$ $d(G)=2$ and $|G'|=p;$

$(3)$ $d(G)=2$ and $Z(G)=\Phi(G).$
\end{lem}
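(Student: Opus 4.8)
**The plan is to prove the equivalence of the three characterizations of minimal non-abelian $p$-groups by establishing the cycle of implications $(1)\Rightarrow(2)\Rightarrow(3)\Rightarrow(1)$.**

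For $(1)\Rightarrow(2)$, I would argue as follows. Since $G$ is minimal non-abelian, every maximal subgroup is abelian. First I would show $d(G)=2$: if $d(G)\geqslant 3$, then $G/\Phi(G)$ has rank at least $3$, and a minimal non-abelian group cannot have too many maximal subgroups all being abelian in a way consistent with $G$ itself being non-abelian — more directly, if $d(G)=1$ then $G$ is cyclic hence abelian, a contradiction, so $d(G)\geqslant 2$; and if $d(G)\geqslant 3$, one can pick two maximal subgroups $M_1,M_2$ that are abelian with $G=M_1M_2$ and $M_1\cap M_2\geqslant \Phi(G)\supseteq Z$-related, forcing $G$ abelian. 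The clean route is: every pair of generators $a,b$ lies in proper (hence abelian) subgroups in a controlled way, so that commutators collapse. Thus $d(G)=2$, say $G=\langle a,b\rangle$. Then $G'=\langle [a,b]\rangle^G$, and since the maximal subgroups $\langle a,\Phi(G)\rangle$ and $\langle b,\Phi(G)\rangle$ are abelian, $[a,b]$ is central, so $G'=\langle[a,b]\rangle$ is cyclic and central; combined with $G/G'$ abelian and $\Phi(G)=G'\mho_1(G)$, a short argument with the class being $2$ yields $|G'|=p$.

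For $(2)\Rightarrow(3)$, assume $d(G)=2$ and $|G'|=p$. Since $|G'|=p$, $G'\leqslant Z(G)$ and $G$ has class $2$. I would first note $\Phi(G)=G'\mho_1(G)$ and then show $Z(G)=\Phi(G)$. The inclusion $\Phi(G)\leqslant Z(G)$ follows because $G'\leqslant Z(G)$ and, for any $x\in G$ and generator $a$, the commutator $[a^p,b]=[a,b]^p\cdot(\text{terms in }G')$ collapses since $|G'|=p$, placing $\mho_1(G)$ in the center. For the reverse inclusion $Z(G)\leqslant\Phi(G)$, I would use that $d(G)=2$: if some central element $z$ lay outside $\Phi(G)$, then $z$ could be taken as part of a minimal generating set, but a central generator forces $G=\langle z\rangle\times\langle\text{something}\rangle$ structure reducing $d$ or making $G$ abelian, contradicting $d(G)=2$ together with $G$ non-abelian.

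For $(3)\Rightarrow(1)$, assume $d(G)=2$ and $Z(G)=\Phi(G)$. Since $d(G)=2$, $G$ is non-abelian (a $2$-generated abelian group need not be, so I must also use $Z(G)=\Phi(G)\neq G$). Every maximal subgroup $M$ satisfies $\Phi(G)\leqslant M$ and has index $p$; since $\Phi(G)=Z(G)$ and $M/\Phi(G)$ is cyclic of order $p$ (as $d(G)=2$ forces $|G/\Phi(G)|=p^2$), $M$ is generated by $Z(G)$ together with one element, hence $M$ is abelian. Thus every maximal, and therefore every proper, subgroup is abelian while $G$ is not, giving $(1)$.

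\medskip

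\textbf{Expected main obstacle.} The delicate step is $(1)\Rightarrow(2)$, specifically pinning down $|G'|=p$ rather than merely $G'$ cyclic central: one must exploit that \emph{every} proper subgroup is abelian, not just the two standard maximal subgroups, to rule out $|G'|=p^2$ or larger. The cleanest handling is to observe that $G/Z(G)$ is elementary abelian of order $p^2$ (since $G$ has class $2$ and $d(G)=2$), whence $|G'|$ divides $|G/Z(G)|/p$ via the commutator map $G/Z(G)\times G/Z(G)\to G'$ being alternating and bilinear with a $2$-dimensional domain, forcing $|G'|=p$. Keeping track of which inclusions are automatic from class $2$ versus which genuinely need the minimal non-abelian hypothesis is where care is required.
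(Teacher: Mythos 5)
The paper does not prove this lemma (it is quoted from \cite[Lemma 2.2]{Zhang2}), so your proposal must stand on its own, and it has a genuine gap in $(1)\Rightarrow(2)$ --- at exactly the step you yourself flag as the main obstacle. Your justification for $|G'|=p$ rests on the claim that $G/Z(G)$ is elementary abelian of order $p^2$ ``since $G$ has class $2$ and $d(G)=2$.'' That claim is false: the group of upper unitriangular $3\times 3$ matrices over $\mathbb{Z}/p^2\mathbb{Z}$ has class $2$ and $d(G)=2$, yet $G/Z(G)\cong {\rm C}_{p^2}\times {\rm C}_{p^2}$ and $|G'|=p^2$. Elementary abelianness of $G/Z(G)$ is not a consequence of class $2$ plus two generators; it genuinely requires the minimal non-abelian hypothesis. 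The repair is the classical one: with $G=\langle a,b\rangle$, the subgroups $\langle a,\Phi(G)\rangle$ and $\langle b,\Phi(G)\rangle$ are proper (hence abelian) maximal subgroups, so \emph{every} element of $\Phi(G)$ centralizes both $a$ and $b$, i.e. $\Phi(G)\leqslant Z(G)$ --- not merely $[a,b]\in Z(G)$, which is all your argument extracts. Since $G$ is non-abelian, $G/Z(G)$ is non-cyclic, so $p^2=|G:\Phi(G)|\geqslant |G:Z(G)|\geqslant p^2$, forcing $Z(G)=\Phi(G)$ and $G/Z(G)\cong {\rm C}_p\times{\rm C}_p$; then $a^p\in Z(G)$ gives $[a,b]^p=[a^p,b]=1$, and $G'=\langle [a,b]\rangle$ has order $p$.

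A second, smaller flaw: your derivation of $d(G)=2$ is both roundabout and incorrect as written. From two abelian maximal subgroups $M_1,M_2$ with $G=M_1M_2$ one gets $M_1\cap M_2\leqslant Z(G)$, hence $|G:Z(G)|\leqslant p^2$ and $c(G)\leqslant 2$ --- this does \emph{not} ``force $G$ abelian.'' The one-line argument you are missing is the standard one: since $G$ is non-abelian there exist $a,b$ with $[a,b]\neq 1$; then $\langle a,b\rangle$ is non-abelian, so minimality gives $G=\langle a,b\rangle$, and $d(G)=2$ because $G$ is not cyclic. With these two repairs the cycle $(1)\Rightarrow(2)\Rightarrow(3)\Rightarrow(1)$ goes through; your sketches of $(2)\Rightarrow(3)$ (central commutator plus $[x^p,y]=[x,y]^p=1$ puts $\Phi(G)$ in the center; a central element outside $\Phi(G)$ together with one more generator would make $G$ abelian) and of $(3)\Rightarrow(1)$ (each maximal subgroup equals $\langle m\rangle Z(G)$, hence is abelian) are essentially correct.
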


\begin{lem}\label{l3}{\rm (\cite[Corollary 2.4]{Z2})}
{\rm (1)} Let $M$ be an $\mathcal {A}_t$-group and
$A$ be an abelian group of order $p^k$. Then $G=M\times A$ is an $\mathcal {A}_{t+k}$-group.

{\rm (2)} Let $M$ be an $\mathcal {A}_t$-group with $|M'|=p$, $G=M\ast A$, where $A$ is abelian with order $p^{k+1}$ and $M\cap A=M'$. Then $G$ is an $\mathcal {A}_{t+k}$-group.
\end{lem}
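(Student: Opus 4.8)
The plan is to verify directly the two defining properties of an $\mathcal{A}_{t+k}$-group for $G$: that $G$ has a non-abelian subgroup of index $p^{t+k-1}$, and that every subgroup of index $p^{t+k}$ is abelian. I would first record the elementary fact that in an $\mathcal{A}_t$-group \emph{every} subgroup of index $\ge p^t$ is abelian: any such subgroup lies inside a subgroup of index exactly $p^t$ (which exists since $M$ is a $p$-group), and the latter is abelian by definition. The existence part is then the same, and easy, in both (1) and (2): since $M\le G$ and $|G|=|M|p^k$ in each case, the non-abelian subgroup $N\le M$ of index $p^{t-1}$ supplied by $M\in\mathcal{A}_t$ has order $|M|/p^{t-1}$, hence index $p^{t+k-1}$ in $G$; this also shows $G$ is non-abelian. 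So the real work is to prove that every subgroup $H$ of order $|M|/p^t$ (i.e. of index $p^{t+k}$) is abelian.

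For (1) I would use the projection $\pi\colon G=M\times A\to M$. Because $A$ is central, $[h_1,h_2]=[m_1,m_2]$ where $m_i=\pi(h_i)$, so $H'=\pi(H)'$ and $H$ is abelian iff $\pi(H)$ is. Since $\ker(\pi|_H)=H\cap A$, we get $|\pi(H)|=|H|/|H\cap A|\le|H|=|M|/p^t$, whence $[M:\pi(H)]\ge p^t$; by the fact above $\pi(H)$ is abelian, and therefore so is $H$. This settles (1) cleanly, with no case distinction.

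For (2) the main difficulty, and the crux of the lemma, is that $G=M\ast A$ is a genuine central product with $M\cap A=M'$, so there is no projection onto $M$ to imitate the argument of (1). Here I would instead exploit that $A\le Z(G)$, that $G'=M'$, and crucially that $|M'|=p$. Let $H\le G$ have index $p^{t+k}$. If $H\cap A=1$, then since $H'\le G'=M'\le A$ we have $H'\le H\cap A=1$, so $H$ is abelian outright. If $H\cap A\ne1$, set $M_0=M\cap HA$; using $M\cdot HA=G$ and $|H\cap A|\ge p$ an order count gives $[M:M_0]=p^{t-1}|H\cap A|\ge p^t$, so $M_0$ is abelian by the fact above. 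Writing each $h\in H$ as $h=ma$ with $m\in M\cap HA=M_0$ and $a\in A$ central then yields $H'=M_0'=1$, so $H$ is abelian. Thus all subgroups of index $p^{t+k}$ are abelian and $G\in\mathcal{A}_{t+k}$. I expect the verification of the order identity $[M:M_0]=p^{t-1}|H\cap A|$ and of $H'=M_0'$ to be the only delicate points; the two-case split forced by $M\cap A=M'$ is what makes (2) genuinely harder than (1).
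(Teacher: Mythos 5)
The paper itself contains no proof of this lemma: it is quoted from \cite[Corollary 2.4]{Z2}, so there is no in-paper argument to compare yours against; I can only judge your proof on its own merits, and it is correct. Your preliminary fact (in an $\mathcal{A}_t$-group every subgroup of index at least $p^t$ is abelian) is properly justified, since in a $p$-group any subgroup sits at the bottom of a chain of subgroups each of index $p$ in the next, hence lies in a subgroup of index exactly $p^t$. Part (1) via the projection $\pi\colon M\times A\to M$ is clean and complete. In part (2), the two points you flagged as delicate do check out: $M\cdot HA=G$ because $M\trianglelefteq G$ (as $[M,A]=1$), so $M\cdot HA$ is a subgroup containing $MA=G$; and the order count $|M_0|=|M|\,|HA|/|G|$ together with $|HA|=|H|\,|A|/|H\cap A|$, $|G|=|M|p^k$, $|A|=p^{k+1}$, $|H|=|M|/p^t$ gives exactly $[M:M_0]=p^{t-1}|H\cap A|$. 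The case split on $H\cap A$ is genuinely necessary, as you suspected: when $H\cap A=1$ the count only yields $[M:M_0]\geqslant p^{t-1}$ and $M_0$ may be non-abelian, but then $H'\le H\cap G'=H\cap M'\le H\cap A=1$ settles it. The lone imprecision is the assertion $H'=M_0'$; what your argument actually shows is $H'\le M_0'$ (every commutator of elements of $H$ is a commutator of elements of $M_0$, not conversely), which is all you need since $M_0'=1$. With that cosmetic fix, the proof stands as a complete, self-contained justification of the cited result.
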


\begin{lem}\label{l14}{\rm(\cite[Lemma 2.15]{A3})} Suppose that $G$ is a finite $p$-group. $M_1$ and $M_2$ are two distinct maximal subgroups of $G$. Then
$|G'|\leqslant p|M_1'M_2'|$.
\end{lem}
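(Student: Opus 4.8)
The plan is to reduce to the case where $M_1$ and $M_2$ are abelian and then to show directly that the derived subgroup has order at most $p$. First I would set $N=M_1'M_2'$. Since $M_1'$ and $M_2'$ are characteristic in the normal subgroups $M_1,M_2$, they are normal in $G$, so $N\trianglelefteq G$; moreover $M_1',M_2'\leqslant G'\leqslant\Phi(G)\leqslant M_1\cap M_2$, whence $N\leqslant G'$ and $N\leqslant M_1\cap M_2$. Passing to $\bar G=G/N$, the images $\bar M_1,\bar M_2$ remain distinct maximal subgroups, and now $\bar M_i'=M_i'N/N=1$, so they are abelian. Because $N\leqslant G'$ we have $\bar G'=G'/N$, so the desired inequality $|G'|\leqslant p|N|$ is equivalent to $|\bar G'|\leqslant p$. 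Thus it suffices to prove the following reduced statement: if a finite $p$-group $G$ has two distinct \emph{abelian} maximal subgroups, then $|G'|\leqslant p$.

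So assume $M_1,M_2$ are abelian maximal subgroups with $M_1\ne M_2$. Both have index $p$ and are normal, so $M_1M_2=G$ and $D:=M_1\cap M_2$ has index $p^2$ in $G$. Since $M_1$ is abelian it centralises $D$, and likewise $M_2$ centralises $D$; as $G=M_1M_2$, this forces $D\leqslant Z(G)$. The quotient $G/D$ has order $p^2$ and is therefore abelian, so $G'\leqslant D\leqslant Z(G)$ and $G$ has nilpotency class at most $2$.

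Finally I would choose $a\in M_1\setminus M_2$ and $b\in M_2\setminus M_1$. Then $M_1=\langle a\rangle D$, $M_2=\langle b\rangle D$, and $G=\langle a,b\rangle D$ with $D$ central. In a group of class at most $2$ the commutator map is bilinear and its values are central, so the commutators involving elements of $D$ vanish and $G'=\langle[a,b]\rangle$ is a cyclic central subgroup. Since $a^p,b^p\in\Phi(G)\leqslant D\leqslant Z(G)$, bilinearity gives $[a,b]^p=[a^p,b]=1$, whence $|G'|\leqslant p$; combined with the first paragraph this yields $|G'|\leqslant p|M_1'M_2'|$. The step I expect to be the main obstacle to phrase cleanly is precisely this last one: recognising that reducing modulo $M_1'M_2'$ renders the two maximal subgroups abelian, that this forces class $2$ with $D$ central, and that the $p$-th powers $a^p,b^p$ then lie in the centre, pinning the order of $[a,b]$ at most $p$.
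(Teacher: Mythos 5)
Your proof is correct. Note that the paper itself gives no proof of this lemma --- it is quoted verbatim from \cite[Lemma 2.15]{A3} --- so there is no in-paper argument to compare against; your two-step argument (pass to $G/M_1'M_2'$ so that both maximal subgroups become abelian, then show that a $p$-group with two distinct abelian maximal subgroups satisfies $|G'|\leqslant p$ via $M_1\cap M_2\leqslant Z(G)$, class $\leqslant 2$, and $[a,b]^p=[a^p,b]=1$) is exactly the standard proof of this fact, and every step checks out.
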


\begin{lem}{\rm (\cite[Lemma 1.4]{BJI})}\label{l17} Let $G$ be a $p$-group and $N\trianglelefteq G$. If $N$ has no abelian $G$-invariant subgroups of type $(p,p)$, then $N$ is either cyclic or a $2$-group of maximal class. If, in addition, $N\le \Phi(G)$, then $N$ is cyclic.
\end{lem}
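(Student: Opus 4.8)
The plan is to reduce the hypothesis to an internal condition on $N$ and then run the classical maximal-normal-abelian argument inside the $G$-invariant category. We may assume $N\neq 1$. Two standard tools drive the reduction. First, a normal subgroup of order $p$ in a $p$-group is central, since $G$ acts on it through $\Aut(C_p)\cong C_{p-1}$, a $p'$-group. Second, a $p$-group acting $\mathbb{F}_p$-linearly on $V\cong\mathbb{F}_p^{\,d}$ fixes a complete flag (by the fixed-point theorem together with induction on $d$), so it has invariant subspaces of every dimension, in particular a $2$-dimensional one whenever $d\ge 2$. Using these, I would first show that every $G$-invariant abelian subgroup $A\le N$ is cyclic: $\Omega_1(A)$ is characteristic in $A$, hence $G$-invariant and elementary abelian, and if it had rank $\ge 2$ the flag tool would produce a $G$-invariant subgroup of type $(p,p)$ inside $N$, against the hypothesis. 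In particular $Z(N)$ is cyclic and $\Omega_1(Z(N))=\langle z\rangle$ has order $p$ with $z\in Z(G)$.

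Next I would extract the structure of $N$. Let $A$ be maximal among $G$-invariant abelian subgroups of $N$; by the previous step $A$ is cyclic. Since $Z(C_N(A))$ is $G$-invariant, abelian, and contains $A$, maximality gives $Z(C_N(A))=A$. I would then prove $C_N(A)=A$: if $C:=C_N(A)>A$ then $Z_2(C)>Z(C)=A$, and choosing, via the flag tool applied to the elementary abelian $G$-module $\Omega_1(Z_2(C)/A)$, a $G$-invariant subgroup $X/A$ of order $p$ yields a $G$-invariant $X$ with $A<X\le C$; as every element of $C$ centralizes $A$ and $X/A$ is cyclic, $X/Z(X)$ is cyclic, so $X$ is abelian, contradicting the maximality of $A$. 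Hence $C_N(A)=A$ and $N/A\hookrightarrow\Aut(A)=\Aut(C_{p^m})$. For $p$ odd the Sylow $p$-subgroup of $\Aut(C_{p^m})$ is cyclic, and the standard analysis then forces $N=A$ to be cyclic; for $p=2$ we have $\Aut(C_{2^m})\cong C_2\times C_{2^{m-2}}$, and the usual case distinction yields that $N$ is cyclic or of maximal class (dihedral, semidihedral, or generalized quaternion).

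For the final assertion I would isolate the self-contained claim that a $2$-group of maximal class cannot be a normal subgroup contained in the Frattini subgroup of a finite $2$-group; combined with the main part this leaves only the cyclic possibility when $N\le\Phi(G)$. The argument is by induction on $|N|$. In the inductive step $z\in Z(G)$, so passing to $\overline G=G/\langle z\rangle$ gives $\overline N=N/Z(N)$ of maximal class and one order smaller, still normal and contained in $\Phi(\overline G)$, and the inductive hypothesis applies. The base case $|N|=8$ is handled directly: with $C=C_G(N)$ we have $\Inn(N)\cong C_2\times C_2$ and, since $N\le\Phi(G)$, $\Inn(N)=NC/C\le\Phi(G/C)$; but $G/C$ embeds in $\Aut(N)$, whose Sylow $2$-subgroup is $D_8$ for both $N\cong D_8$ and $N\cong Q_8$, and every subgroup $H\le D_8$ satisfies $|\Phi(H)|\le 2$, contradicting $|\Inn(N)|=4$.

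The main obstacle is the structural step for $p=2$ in the second paragraph: once $C_N(A)=A$ is cyclic and self-centralizing, distinguishing the dihedral, semidihedral, and quaternion outcomes from the cyclic one requires the careful classical analysis of the image of $N/A$ in $\Aut(C_{2^m})$, and this is where genuine casework enters. By contrast, the reduction in the first paragraph and the Frattini claim in the third are clean; the delicate point throughout is to keep every subgroup produced $G$-invariant rather than merely normal in $N$, which is exactly what the flag tool secures by supplying invariant subspaces of the relevant $\mathbb{F}_p$-modules.
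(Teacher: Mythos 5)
First, note that the paper contains no proof of this statement at all: it is quoted directly from Berkovich's book as \cite[Lemma 1.4]{BJI}, so your attempt can only be judged on its own merits. Much of it holds up. The reduction in your first paragraph (every $G$-invariant abelian subgroup of $N$ is cyclic, via $\Omega_1$ and the invariant-flag argument), the maximality argument giving $Z(C_N(A))=A$ and then $C_N(A)=A$ through a $G$-invariant $X/A$ of order $p$ inside $\Omega_1(Z_2(C)/A)$, and the entire Frattini step in your third paragraph (induction on $|N|$, with the base case $|N|=8$ settled by $\Inn(N)\le \Phi(G/C)$, $G/C$ embedding in a Sylow $2$-subgroup ${\rm D}_8$ of $\Aut(N)$, and $|\Phi(H)|\le 2$ for all $H\le {\rm D}_8$) are correct, and you are right that the value added over the classical ``no normal $(p,p)$'' theorem is precisely keeping every produced subgroup $G$-invariant.

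However, there is a genuine gap, and it sits exactly where you place your ``main obstacle'': the dichotomy ``cyclic or $2$-group of maximal class'' is never actually derived from the configuration $A=C_N(A)$ cyclic, $N/A\hookrightarrow \Aut({\rm C}_{p^m})$. For $p=2$ you defer this to ``the usual case distinction,'' but that case distinction \emph{is} the lemma: one must show that a $G$-invariant subgroup $X$ with $|X:A|=2$ is dihedral, semidihedral or quaternion rather than modular (using that $\Omega_1({\rm M}_2(m,1))\cong {\rm C}_2\times {\rm C}_2$ is characteristic, hence a forbidden $G$-invariant $(2,2)$-subgroup), and must then bootstrap from such an $X$ to $N$ itself having maximal class; none of this is routine, and your sketch does not contain it. Worse, your odd-$p$ claim is not a valid implication as written: ``$N/A$ embeds in a cyclic group, hence $N=A$'' is false in general, since $N={\rm M}_p(n,1)$ with $A=\langle a\rangle$ satisfies $A=C_N(A)$ cyclic and $N/A\cong {\rm C}_p$ yet $N\neq A$. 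To close that case you must re-invoke the hypothesis: the preimage $X$ of the unique order-$p$ subgroup of the cyclic group $N/A$ is $G$-invariant and nonabelian with a cyclic maximal subgroup, hence $X\cong {\rm M}_p(m,1)$ for odd $p$, and its characteristic subgroup $\Omega_1(X)\cong {\rm C}_p\times{\rm C}_p$ is then a $G$-invariant abelian subgroup of type $(p,p)$, a contradiction. So the skeleton and the Frattini refinement are sound, but the structural core of the lemma --- the only place the maximal-class groups can emerge --- is missing.
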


\begin{lem}{\rm (\cite[Proposition 1.13]{BJI})}\label{l18} Let $G$ be a $p$-group and let $N\le \Phi(G)$ be $G$-invariant. If $Z(N)$ is cyclic, then $N$ is also cyclic.
\end{lem}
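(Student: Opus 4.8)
The plan is to reduce everything to Lemma \ref{l17}. That lemma tells us that a $G$-invariant $N\le\Phi(G)$ which possesses no abelian $G$-invariant subgroup of type $(p,p)$ must be cyclic; so it suffices to show that the hypothesis ``$Z(N)$ is cyclic'' forbids $N$ from containing any such subgroup. I would therefore argue by contradiction: assume $N$ is not cyclic, so that by the contrapositive of Lemma \ref{l17} there exists an abelian $G$-invariant subgroup $A\le N$ with $A\cong C_p\times C_p$, and aim to show that this forces $A\le Z(N)$, contradicting the cyclicity of $Z(N)$.

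The core of the argument is to locate $A$ inside $Z(N)$. Since $A\trianglelefteq G$ and $|A|=p^2$, conjugation gives a homomorphism $G\to\Aut(A)\cong{\rm GL}(2,p)$ with kernel $C_G(A)$, so $G/C_G(A)$ is isomorphic to a $p$-subgroup of ${\rm GL}(2,p)$. As $|{\rm GL}(2,p)|=p(p-1)^2(p+1)$ has $p$-part exactly $p$, every $p$-subgroup of ${\rm GL}(2,p)$ has order at most $p$; hence $|G:C_G(A)|\le p$. Consequently $C_G(A)$ is either $G$ itself or a maximal subgroup of $G$, and in both cases $\Phi(G)\le C_G(A)$. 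Now the hypothesis $N\le\Phi(G)$ enters decisively: it yields $N\le\Phi(G)\le C_G(A)$, i.e. $[N,A]=1$, so the non-cyclic group $A$ lies in $Z(N)$. This is the desired contradiction, and it completes the proof via Lemma \ref{l17}.

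I expect the only genuinely substantive step to be the observation that $A\le\Phi(G)$ forces $\Phi(G)$, and hence $N$, to centralize $A$; everything else is bookkeeping. The two places where the hypotheses are indispensable are worth flagging: the smallness of the Sylow $p$-subgroup of ${\rm GL}(2,p)$ is what bounds $|G:C_G(A)|$ by $p$, and the containment $N\le\Phi(G)$ is what upgrades ``index at most $p$'' into ``$N$ centralizes $A$'' (through the fact that $\Phi(G)$ lies in every maximal subgroup). Without $N\le\Phi(G)$ the conclusion genuinely fails, which is precisely why Lemma \ref{l17} splits off the $2$-group-of-maximal-class exception that this containment removes.
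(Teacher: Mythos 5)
Your proof is correct. The paper itself gives no proof of this lemma---it is quoted directly from Berkovich's book as \cite[Proposition 1.13]{BJI}---so there is nothing in-paper to compare against; your argument (take a $G$-invariant subgroup $A$ of type $(p,p)$ in a noncyclic $N$ via the contrapositive of Lemma~\ref{l17}, bound $|G:C_G(A)|\le p$ through the embedding of $G/C_G(A)$ into a Sylow $p$-subgroup of ${\rm GL}(2,p)$, and use $N\le\Phi(G)\le C_G(A)$ to force $A\le Z(N)$) is exactly the standard proof of this result and is sound in every step.
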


\section{A classification of $p$-groups with a unique $\mathcal{A}_2$-subgroup}
In the following we always assume $G$ is an $\mathcal{A}_t$-group with $t\geqslant3$. We discuss in two cases: $|G'|=p$ and $|G'|\geqslant p^2$.

\begin{thm}\label{zd1}
Let $G$ be a finite $p$-group with $|G'|=p$. Then $G$ has a unique $\mathcal{A}_2$-subgroup if and only if $G=H\ast C$, where $H$ is a non-abelian subgroup of order $p^3$ and $C$ is cyclic.
\end{thm}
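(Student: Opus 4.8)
The plan is to read the whole statement off two invariants of $G$. Since $\Aut(G')$ has order prime to $p$, one has $G'\le Z(G)$, so $G$ has class $2$, every non-abelian subgroup $K$ satisfies $K'=G'=\langle z\rangle$ of order $p$, and $\bar V:=G/Z(G)$ is an $\mathbb F_p$-space carrying the non-degenerate alternating form $(\bar x,\bar y)\mapsto[x,y]\in G'\cong\mathbb F_p$. In particular $\dim\bar V=2m$ is even and $G/Z(G)$ is elementary abelian. The theorem will amount to proving that $\alpha_2(G)=1$ is equivalent to $m=1$ together with $Z:=Z(G)$ cyclic, and then identifying such groups as $H\ast C$.

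The technical engine is a description of the $\mathcal A_2$-subgroups. I would first establish (or quote from the classification in \cite{A2}) that an $\mathcal A_2$-group $K$ with $|K'|=p$ has $|K|=p^4$, $d(K)=3$ and $\Phi(K)=K'$: indeed $d(K)\ge3$ by Lemma \ref{l1}, and if $K/Z(K)$ had rank $\ge4$ a codimension-$2$ subspace meeting a hyperbolic plane would give a non-abelian subgroup of index $p^2$, against $K\in\mathcal A_2$, so $|K/Z(K)|=p^2$, and a short computation with $\mho_1(K)\le Z(K)$ pins $|K|=p^4$. Applied inside $G$, every $\mathcal A_2$-subgroup $K$ is non-abelian of order $p^4$, contains $G'$, covers $G/Z$ (its non-commuting pair spans a plane of $\bar V$ and $KZ/Z$ is that plane), meets $Z$ in a subgroup $W:=K\cap Z$ of order $p^2$ with $\mho_1(W)\le\langle z\rangle$, and decomposes as $K=H_1\ast W$ for some non-abelian $H_1$ of order $p^3$ with $H_1\cap Z=\langle z\rangle$. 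Thus counting $\mathcal A_2$-subgroups reduces to counting the admissible central pieces $W$ and their lifts.

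For the backward implication I start from $G=H\ast C$ and compute $Z(G)=C_G(H)=Z(H)C=C$ (using $Z(H)=H'\le C$ for both non-abelian groups of order $p^3$), which is cyclic, and $G/Z\cong H/\langle z\rangle\cong C_p\times C_p$, i.e. $m=1$. Writing $Z_2$ for the unique subgroup of order $p^2$ of the cyclic group $C$, Lemma \ref{l3}(2) gives $K_0:=H\ast Z_2\in\mathcal A_2$. For uniqueness, any $\mathcal A_2$-subgroup $K$ has $W=K\cap Z$ of order $p^2$, hence $W=Z_2$ since $Z$ is cyclic; moreover $\Phi(K)=\langle z\rangle$ forces the $p$th powers of the lifts of $a,b$ into $\langle z\rangle=\Omega_1(Z)$, so the central twists lie in $\Omega_2(Z)=Z_2\le K$, whence $K=\langle a,b\rangle Z_2=K_0$. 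Cyclicity is used exactly twice here: it makes $Z_2$ unique and makes $\Omega_1(Z)\le Z_2$. (For $p=2$ the same scheme runs with the appropriate order-$p^3$ groups, a case I would verify separately.)

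For the forward implication I assume $\alpha_2(G)=1$ and rule out every other configuration by exhibiting two distinct $\mathcal A_2$-subgroups. If $m\ge2$, two different non-degenerate planes of $\bar V$ lift to non-abelian $H_1,H_2$ of order $p^3$, and $H_i\ast Z_2$ (or, in the extraspecial case $Z=\langle z\rangle$, the preimages of two different $3$-spaces with $1$-dimensional radical) are distinct $\mathcal A_2$-subgroups, a contradiction; hence $m=1$. If $m=1$ but $Z$ is non-cyclic, then the set of order-$p^2$ subgroups $W\le Z$ with $z\in W$ and $\mho_1(W)\le\langle z\rangle$ has at least two members, and/or $\{\zeta\in Z:\zeta^p\in\langle z\rangle\}\not\le W$ supplies a second lift; either way $\alpha_2(G)\ge2$, so $Z$ is cyclic. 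Finally, with $m=1$ and $Z$ cyclic, $G/Z\cong C_p\times C_p$ gives $a,b$ with $[a,b]=z$ and $a^p,b^p\in\mho_1(Z)$ (the alternative $a^p\notin\mho_1(Z)$ would force $d(G)=2$, i.e. $G\in\mathcal A_1$, excluded by $t\ge3$); adjusting $a,b$ by suitable powers of a generator of $Z$ makes them of order $p$, so $H:=\langle a,b\rangle$ is non-abelian of order $p^3$ and $G=H\ast Z$ with $C=Z$ cyclic. I expect this forward ``$Z$ cyclic'' step to be the main obstacle: one must check in every subcase (elementary abelian $Z$, mixed $Z$, presence or absence of order-$p^2$ central elements squaring into $\langle z\rangle$, availability of order-$p$ lifts, and the $p=2$ peculiarities) that a second candidate is genuinely $\mathcal A_2$ and genuinely distinct; the rest reduces cleanly to Lemma \ref{l1} and Lemma \ref{l3}.
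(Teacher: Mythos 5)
Your proposed reformulation of the theorem is false, and this sinks the forward direction. The paper's definition of central product (Section~2) only requires $G=HC$ and $[H,C]=1$, so the class ``$H\ast C$ with $C$ cyclic'' \emph{includes} the direct products $H\times C$. Take $G={\rm Q}_8\times {\rm C}_4$ (for odd $p$, take ${\rm M}_p(1,1,1)\times {\rm C}_{p^2}$): this is an instance of the theorem's conclusion, it is an $\mathcal{A}_3$-group by Lemma~\ref{l3}(1), and it has a unique $\mathcal{A}_2$-subgroup (namely ${\rm Q}_8\times {\rm C}_2$, as the backward implication --- which can be proved independently of your scheme --- shows); yet $Z(G)\cong {\rm C}_2\times {\rm C}_4$ is \emph{not} cyclic. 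So ``$\alpha_2(G)=1$ iff $m=1$ and $Z(G)$ cyclic'' is not equivalent to the statement, and your step ``$m=1$ with $Z$ non-cyclic forces $\alpha_2(G)\geqslant 2$'' cannot be repaired: in ${\rm Q}_8\times {\rm C}_4$ with $z=(-1,1)$, the only subgroup $W\le Z$ of order $4$ with $z\in W$ and $\mho_1(W)\le\langle z\rangle$ is $\Omega_1(Z)$, and $\{\zeta\in Z:\zeta^2\in\langle z\rangle\}=\Omega_1(Z)$ as well, so both of your claimed sources of a second $\mathcal{A}_2$-subgroup are empty. Your backward direction has the same blind spot: the computation $Z(G)=C$ uses $Z(H)=H'\le C$, which fails precisely when $H\cap C=1$.

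A second, independent error is your structural lemma that an $\mathcal{A}_2$-group $K$ with $|K'|=p$ has $|K|=p^4$ and $\Phi(K)=K'$. By the paper's own Lemma~\ref{l3}(1), $K={\rm M}_p(n,1)\times {\rm C}_p$ is an $\mathcal{A}_2$-group for every $n\geqslant 2$; it has $|K'|=p$, $|K/Z(K)|=p^2$ and $\mho_1(K)\le Z(K)$, but $|K|=p^{n+2}$ and $\Phi(K)\cong {\rm C}_{p^{n-1}}$. The gap in your ``short computation'' is that $|K/Z(K)|=p^2$ puts no bound on $|Z(K)|$, hence none on $|K|$; the correct statement is only that $\mathcal{A}_2$-\emph{subgroups of the particular group} $G=H\ast C$ have order $p^4$, and that needs an ambient argument (the paper's: any $\mathcal{A}_2$-subgroup $K$ contains the non-abelian $H$ of order $p^3$, so $|K:H|\le p$). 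Note how the paper's proof avoids the center entirely: backward, it shows every $\mathcal{A}_2$-subgroup contains, hence equals, the preimage $L$ of $\Omega_1(G/G')$; forward, it picks a non-abelian subgroup $H$ of minimal order, observes that every $K$ with $H<K$ and $|K:H|=p$ is $\mathcal{A}_2$, so uniqueness forces the abelian group $G/H$ to have a unique subgroup of order $p$ and thus be cyclic, giving $d(G)=3$, and then finishes by checking the known classification of $p$-groups with $d(G)=3$ and $|G'|=p$. If you want a self-contained symplectic-style argument, it must be organized around $G/G'$ (where $\Omega_1$ behaves uniformly for both $H\times C$ and the proper central products), not around $Z(G)$.
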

\begin{proof}
$(\Leftarrow)$ Since $|G'|=p$, $M'=G'$ for any non-abelian subgroup $M$ of $G$.
Let $L/G'=\Omega_1(G/G')$. Then $L/G'\cong {\rm C}_p^3$.
Let $K$ be an $\mathcal{A}_2$-subgroup of $G$. Then $d(K)\geqslant 3$  by Lemma \ref{l1}.
It follows that $|\O_1(K/K')|\geqslant p^3$.
This forces that $\Omega_1(K/G')=\Omega_1(K/K')=\O_1(G/G')$.
Hence $L/G'\leq K/G'$. Thus $L\leq K$. Notice that $H<L$. We have $H<K$.
Since $K$ is an $\mathcal{A}_2$-group, $|K/H|=p$.  This forces $K=L$. That is, $G$ has a unique $\mathcal{A}_2$-subgroup.

$(\Rightarrow)$
Suppose that $H$ is a non-abelian subgroup of minimum order of $G$. Since $|G'|=p$, $H'=G'$. In particular, $H\unlhd G$.
If $K/H$ is a subgroup of $G/H$ of order $p$, then $|K:H|=p$. By the minimality of $H$, each subgroup of $K$ of index $\geqslant p^2$
is abelian. Hence $K$ is an $\mathcal{A}_2$-group.
Since $G$ has a unique $\mathcal{A}_2$-subgroup,  $G/H$ has a unique subgroup of order $p$. Obviously, $G/H$ is abelian. Hence $G/H$ is cyclic.
By the minimality of $H$, $d(H)=2$. Thus $d(G)\leqslant 3$. By Lemma~\ref{l1}, $d(G)\ne 2$. Hence $d(G)=3$.
Fortunately, the $p$-groups $G$ with $d(G)=3$ and $|G'|=p$ were classified, which are listed in \cite[Lemma 3.1]{Z2}. The conclusion follows by a simple checking.
\end{proof}

\begin{cor}\label{zc1}
Let $G$ be a finite $p$-group with a unique $\mathcal{A}_2$-subgroup $K$ and $|G'|=p$. Then $K\not\leq \Phi(G)$.
\end{cor}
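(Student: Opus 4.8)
The plan is to use the explicit structure from Theorem \ref{zd1}, namely $G = H \ast C$ with $H$ non-abelian of order $p^3$ and $C$ cyclic, together with the fact established in that proof that the unique $\mathcal{A}_2$-subgroup is $K = L$, where $L/G' = \Omega_1(G/G')$. The key observation is that since $d(K) \geqslant 3$ by Lemma \ref{l1}, we have $d(L) \geqslant 3$, and I want to argue that a subgroup generated inside $\Phi(G)$ cannot have rank $3$ here. So the whole argument reduces to comparing $\Phi(G)$ with $K$ via their minimal number of generators or via the structure of $G/\Phi(G)$.

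First I would pin down $d(G)$ and $\Phi(G)$. From the proof of Theorem \ref{zd1} we already know $d(G) = 3$, so $|G/\Phi(G)| = p^3$ and $G = H \ast C$ is generated by three elements. Next I would observe that $K \not\leq \Phi(G)$ is equivalent to $K\Phi(G)/\Phi(G) \neq 1$, i.e. $K$ must contain a generator of $G$. Since $K = L$ and $L/G' = \Omega_1(G/G')$, I would translate the question into the abelian quotient $G/G'$: the claim $K \leq \Phi(G)$ would force $\Omega_1(G/G') \leq \Phi(G)/G' = \Phi(G/G')$, and I would derive a contradiction from the structure of the abelian group $G/G'$.

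The cleanest route is to work in $\bar G = G/G'$, an abelian $p$-group with $d(\bar G) = d(G) = 3$ (since $G' = \Phi(G) \cap \text{(derived part)}$ and $|G'| = p$ means $\Phi(G)/G' = \mho_1(G/G')$). In an abelian $p$-group $\bar G$ of rank $3$, the subgroup $\Omega_1(\bar G)$ has order exactly $p^3$ and satisfies $\Omega_1(\bar G) \not\leq \mho_1(\bar G) = \Phi(\bar G)$; indeed $\Omega_1(\bar G)$ surjects onto $\bar G/\Phi(\bar G) \cong \mathrm{C}_p^3$ because each of the three cyclic direct factors contributes an order-$p$ element that is a generator modulo the Frattini subgroup. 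Hence $K/G' = \Omega_1(\bar G)$ is not contained in $\Phi(\bar G) = \Phi(G)/G'$, which gives $K \not\leq \Phi(G)$ directly.

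I expect the main obstacle to be verifying cleanly that $\Phi(G)/G' = \Phi(G/G')$ and that $\Omega_1$ of a rank-$3$ abelian $p$-group is not contained in its Frattini subgroup; both are elementary but need the hypothesis $|G'|=p$ (so that $G' \leq \Phi(G)$ and passing to $G/G'$ preserves both $d$ and the Frattini structure). The potential subtlety is the prime $p=2$ case, where one must confirm that $\Omega_1(G/G')$ still meets every generating class modulo $\Phi$; this follows from the invariant-factor decomposition of the abelian group $G/G'$ and does not depend on $p$. Once these two facts are in hand, the corollary is immediate from the identification $K = L$ made in the proof of Theorem \ref{zd1}.
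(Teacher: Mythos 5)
Your overall strategy (reduce to Theorem \ref{zd1} and the identification $K=L$ with $L/G'=\Omega_1(G/G')$, then argue in $G/G'$) is the natural one, but the step you lean on is false as stated. It is \emph{not} true that in a rank-$3$ abelian $p$-group $\overline{G}$ one has $\Omega_1(\overline{G})\not\leq \Phi(\overline{G})$: take $\overline{G}\cong {\rm C}_{p^2}\times{\rm C}_{p^2}\times{\rm C}_{p^2}$, where $\Omega_1(\overline{G})=\mho_1(\overline{G})=\Phi(\overline{G})\cong {\rm C}_p^3$. Your justification --- ``each of the three cyclic direct factors contributes an order-$p$ element that is a generator modulo the Frattini subgroup'' --- only holds for factors of order exactly $p$; if a factor $\langle x\rangle$ has order $p^n$ with $n\geqslant 2$, its unique subgroup of order $p$ is $\langle x^{p^{n-1}}\rangle\leq\mho_1(\overline{G})$, so it contributes nothing modulo $\Phi(\overline{G})$. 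Rank alone therefore does not give the conclusion; your argument as written would ``prove'' the statement for a hypothetical group with $G/G'\cong{\rm C}_{p^2}^3$, where your key claim simply fails.

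What rescues the argument is the specific structure supplied by Theorem \ref{zd1}, which you cite but never actually use: $G=H\ast C$ with $H$ non-abelian of order $p^3$ and $C$ cyclic forces $G/G'\cong (H/G')\times (CG'/G')\cong {\rm C}_p\times{\rm C}_p\times{\rm C}_{p^m}$, so two invariant factors of $G/G'$ equal $p$, and only then does $\Omega_1(G/G')\not\leq\mho_1(G/G')=\Phi(G)/G'$ follow. Even more directly: the proof of Theorem \ref{zd1} shows $H<K$, and $H\not\leq\Phi(G)$, since otherwise $G=HC=\Phi(G)C$ would give $G=C$ (elements of $\Phi(G)$ are non-generators), contradicting the fact that $G$ is non-abelian; hence $K\not\leq\Phi(G)$. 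Either repair is short, but it must invoke the central-product decomposition rather than a general fact about rank-$3$ abelian $p$-groups.
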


\begin{lem}\label{zl31}
Let $G$ be a finite $p$-group with a unique $\mathcal{A}_2$-subgroup $K$ and $|G'|\geqslant p^2$. Then $K\nleq \Phi(G)$ if and only if $G$ has an $\mathcal{A}_1$-subgroup of index $p$.

\end{lem}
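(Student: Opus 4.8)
The plan is to prove both implications, with the forward direction ($K\nleq\Phi(G)\Rightarrow$ existence of an $\mathcal{A}_1$-subgroup of index $p$) being the substantial one. First I would dispose of the reverse implication. Suppose $A$ is an $\mathcal{A}_1$-subgroup of $G$ of index $p$. Being maximal, $A$ contains $\Phi(G)$. Since $A$ is minimal non-abelian, every proper subgroup of $A$ is abelian, so $A$ has no $\mathcal{A}_2$-subgroup at all; in particular the unique $\mathcal{A}_2$-subgroup $K$ is not contained in $A$ (note $K\ne A$, as they are of different types). Hence $K\nleq A\supseteq\Phi(G)$, giving $K\nleq\Phi(G)$.

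The forward direction rests on a structural observation that I would isolate as a preliminary claim: every $\mathcal{A}_s$-subgroup $H$ of $G$ with $s\ge 2$ contains $K$. To see this, recall that an $\mathcal{A}_s$-group ($s\ge 2$) possesses a maximal subgroup which is an $\mathcal{A}_{s-1}$-group: if $L<H$ is non-abelian of index $p^{s-1}$ and $N$ is a maximal subgroup of $H$ with $L\le N$, then $N$ has a non-abelian subgroup of index $p^{s-2}$ while every subgroup of $N$ of index $p^{s-1}$ is abelian, so $N\in\mathcal{A}_{s-1}$. Iterating this from $H$ downward produces an $\mathcal{A}_2$-subgroup of $H$, which by uniqueness must be $K$; thus $K\le H$. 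The consequence I will use is that a non-abelian maximal subgroup $M$ of $G$ not containing $K$ cannot be an $\mathcal{A}_s$-group with $s\ge 2$, hence must be an $\mathcal{A}_1$-subgroup.

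Now assume $K\nleq\Phi(G)$. I would first count the maximal subgroups avoiding $K$: since $G$ is non-abelian, $d(G)\ge 2$, and since $K\Phi(G)/\Phi(G)\ne 1$ in the elementary abelian group $G/\Phi(G)$, the hyperplanes of $G/\Phi(G)$ missing a fixed nonzero vector of $K\Phi(G)/\Phi(G)$ number $p^{d(G)-1}\ge p\ge 2$, each corresponding to a maximal subgroup of $G$ not containing $K$. On the other hand, the hypothesis $|G'|\ge p^2$ together with Lemma~\ref{l14} rules out two distinct abelian maximal subgroups, since these would force $|G'|\le p|M_1'M_2'|=p$. Therefore, among the (at least two) maximal subgroups avoiding $K$, at least one, say $M$, is non-abelian; by the claim $M$ is an $\mathcal{A}_1$-subgroup of index $p$, completing the harder direction.

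The main obstacle is the structural claim that every $\mathcal{A}_s$-subgroup with $s\ge 2$ contains $K$; once that is in hand, the forward direction reduces to the elementary hyperplane count and a single application of Lemma~\ref{l14}. I would carefully check the degenerate possibilities — for instance whether $K$ can itself be maximal when $t=3$, and that the descending chain of $\mathcal{A}_i$-subgroups really terminates at $s=2$ rather than overshooting to $s=1$ — to ensure the claim and the index bookkeeping are airtight.
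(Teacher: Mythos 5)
Your proof is correct and follows essentially the same route as the paper: the reverse direction is identical, and your forward direction uses the same key facts (every non-abelian maximal subgroup that is not minimal non-abelian must contain $K$ by uniqueness, a count of maximal subgroups avoiding $K$, and Lemma~\ref{l14} to exclude two abelian maximal subgroups). The only differences are cosmetic: you prove the descent claim ($\mathcal{A}_s$-groups with $s\ge 2$ contain an $\mathcal{A}_2$-subgroup) which the paper merely asserts, and you count hyperplanes missing a fixed vector of $K\Phi(G)/\Phi(G)$ where the paper counts maximal subgroups of $G/K\Phi(G)$ — both yield the same bound $p^{d-1}$.
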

\begin{proof}
$(\Leftarrow)$ Let $M$ be an $\mathcal{A}_1$-subgroup of index $p$ of $G$. Then $K\nleq M$. Hence $K\nleq \Phi(G)$.

$(\Rightarrow)$
Let $M$ be a maximal subgroup of $G$. If $M$ is neither abelian nor minimal non-abelian, then $K\le M$. Moreover, $K\Phi(G)\le M$.
Let $d(G)=d$ and $d(G/K\Phi(G))=k$. Since $K\nleq \Phi(G)$,  $k<d$. Notice that $G$ has $1+p+\cdots+p^{d-1}$ maximal subgroups and $G/K\Phi(G)$ has
$1+p+\cdots+p^{k-1}$ maximal subgroups. Thus $G$ has at least $p^{d-1}$ maximal subgroups which are abelian or minimal non-abelian.
It follows from $|G'|\geqslant p^2$ and Lemma \ref{l14} that $G$ has at most one abelian maximal subgroup.
Thus $G$ has at least $(p^{d-1}-1)$ $\mathcal{A}_1$-subgroup of index $p$.
\end{proof}

By the proof process $(\Rightarrow)$ of Lemma \ref{zl31}, we have

\begin{cor}\label{c1}
Assume a $p$-group $G$ has a unique $\mathcal{A}_1$-subgroup $A$ of index $p$ and $|G'|\geqslant p^2$. If $G$ has a unique $\mathcal{A}_2$-subgroup, then $G$ is a two-generator $2$-group with an abelian maximal subgroup. In particular, $G/A'$ has at least two abelian maximal subgroups.
\end{cor}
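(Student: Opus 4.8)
The plan is to recycle the counting in the $(\Rightarrow)$ direction of Lemma~\ref{zl31} almost verbatim, the only new input being that the number of $\mathcal{A}_1$-subgroups of index $p$ is now prescribed to equal $1$. Write $K$ for the unique $\mathcal{A}_2$-subgroup and $d=d(G)$. Since by hypothesis $G$ has the $\mathcal{A}_1$-subgroup $A$ of index $p$, the $(\Leftarrow)$ direction of Lemma~\ref{zl31} already yields $K\nleq\Phi(G)$, so the hypotheses of the $(\Rightarrow)$ argument are satisfied. That argument then shows that $G$ has at least $p^{d-1}-1$ $\mathcal{A}_1$-subgroups of index $p$.

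Next I would feed in the uniqueness of $A$. Because there is exactly one $\mathcal{A}_1$-subgroup of index $p$, we get $p^{d-1}-1\leqslant 1$, that is, $p^{d-1}\leqslant 2$. As $G$ is non-abelian we have $d\geqslant 2$, hence $p^{d-1}\geqslant p\geqslant 2$; comparing the two bounds squeezes $p^{d-1}=2$, forcing $p=2$ and $d=2$. Thus $G$ is a two-generator $2$-group, with exactly $1+2=3$ maximal subgroups. The same counting gives that at least $p^{d-1}=2$ of these three maximal subgroups are abelian or minimal non-abelian; exactly one of them is the minimal non-abelian $A$, while by Lemma~\ref{l14} (using $|G'|\geqslant p^2$) at most one of them is abelian. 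Hence precisely one maximal subgroup, say $B$, is abelian, which furnishes the required abelian maximal subgroup.

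For the \emph{in particular} clause I would pass to $\overline{G}=G/A'$. Here $A'\trianglelefteq G$, since $A\trianglelefteq G$ (it has index $p$) and $A'$ is characteristic in $A$; moreover $A'\leqslant G'\leqslant\Phi(G)$, so $A'$ is contained in every maximal subgroup of $G$. Consequently $A/A'$ and $BA'/A'=B/A'$ are both maximal subgroups of $\overline{G}$: the former is abelian as the quotient of $A$ by its derived subgroup, the latter because $B$ is abelian. Since $A\ne B$ yet both contain $A'$, their images in $\overline{G}$ are distinct, so $\overline{G}=G/A'$ carries at least two abelian maximal subgroups.

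The whole argument is bookkeeping once Lemma~\ref{zl31} is available, so no serious obstacle arises. The one point deserving care is the numerical squeeze: the inequality $p^{d-1}\leqslant 2$ combined with $d\geqslant 2$ is exactly what collapses the configuration to the case $p=2$, $d=2$, and one must then verify that the remaining maximal subgroup count genuinely produces an \emph{abelian} maximal subgroup (and not merely another minimal non-abelian one), which is where Lemma~\ref{l14} and the hypothesis $|G'|\geqslant p^2$ do the decisive work.
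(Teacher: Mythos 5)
Your proposal is correct and is essentially the paper's own argument: the paper derives this corollary precisely by re-running the counting from the $(\Rightarrow)$ direction of Lemma~\ref{zl31} (with $K\nleq\Phi(G)$ supplied by the $(\Leftarrow)$ direction), and your numerical squeeze $p^{d-1}-1\leqslant 1$ forcing $p=2$, $d(G)=2$, together with Lemma~\ref{l14} isolating exactly one abelian maximal subgroup, is the intended reading. The treatment of the \emph{in particular} clause via $A'\leqslant\Phi(G)$ and the correspondence of maximal subgroups in $G/A'$ is also the natural completion the paper leaves implicit.
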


\begin{lem}\label{juzhe}
Assume $G$ is a finite $p$-group, $d(G)=3$, $\Phi(G)\leq Z(G)$ and $G'\cong {\rm C}_p^3$. Let $\overline{G}=G/G'=\lg \overline{a_1},~\overline{a_2}, ~\overline{a_3}\rg$, where $o(\overline{a_i})=p^{m_i}$, $i=1,2,3$ and $m_1\geqslant m_2\geqslant m_3$.
Let $\omega(G)=(\omega_{ij})$ be a $3\times 3$ matrix over $GF(p)$ and $a_i^{p^{m_i}}=x^{\omega_{i1}}y^{\omega_{i2}}z^{\omega_{i3}}$,  where $x=[a_2,a_3],~y=[a_3,a_1]$,~$z=[a_1,a_2]$. If $\omega(G)$ has at least two principal minors to be equal $0$, then $G$ has at least two $\mathcal{A}_2$-subgroups.
\end{lem}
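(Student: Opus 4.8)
The plan is to reduce everything to the bilinear commutator calculus forced by $\Phi(G)\le Z(G)$, and then to manufacture $\mathcal{A}_2$-subgroups by attaching a central element of order $p$ to each of the three obvious minimal non-abelian subgroups $\langle a_j,a_k\rangle$. Since $\Phi(G)=G'\mho_1(G)\le Z(G)$, the group $G$ has class $2$, $G'$ is central and elementary abelian, and $x,y,z$ form a basis of $G'\cong {\rm C}_p^3$. For each $i$ I write $\{i,j,k\}=\{1,2,3\}$ and set $H_i=\langle a_j,a_k\rangle$. Because $\overline{a_j},\overline{a_k}$ belong to a minimal generating set, $d(H_i)=2$, while $H_i'=\langle[a_j,a_k]\rangle$ is one of the three coordinate lines $\langle x\rangle,\langle y\rangle,\langle z\rangle$ and so has order $p$; hence $H_i$ is minimal non-abelian by Lemma~\ref{l1}.

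The key computation is the identification of $H_i\cap G'$. In the class-$2$ group every element of $H_i$ has the shape $a_j^s a_k^t c$ with $c\in H_i'$, and such an element lies in $G'$ exactly when $p^{m_j}\mid s$ and $p^{m_k}\mid t$; therefore $H_i\cap G'=\langle[a_j,a_k],\,a_j^{p^{m_j}},\,a_k^{p^{m_k}}\rangle$. Reading off the $\omega(G)$-coordinates, the two power elements $a_j^{p^{m_j}},a_k^{p^{m_k}}$ contribute, modulo $H_i'$, precisely the row space of the $2\times2$ submatrix of $\omega(G)$ obtained by deleting the $i$-th row and column. Thus $G'\le H_i$ if and only if that principal minor is nonzero; equivalently, the vanishing of the $i$-th principal minor says exactly that $G'\not\le H_i$, so there is an element $w\in G'\setminus H_i$ of order $p$. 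For such a $w$ the set $K_i:=H_i\langle w\rangle=H_i\times\langle w\rangle$ is a subgroup (the product is direct because $w$ is central of order $p$ and $w\notin H_i$), and it satisfies $d(K_i)=3$ with $K_i'=H_i'$. By Lemma~\ref{l3}(1), applied with the $\mathcal{A}_1$-group $H_i$ and the abelian group $\langle w\rangle$ of order $p$, the subgroup $K_i$ is an $\mathcal{A}_2$-subgroup of $G$.

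Finally I would settle distinctness. If two principal minors vanish, say those indexed by $i$ and $i'$, the construction produces $\mathcal{A}_2$-subgroups $K_i,K_{i'}$ with $K_i'=H_i'$ and $K_{i'}'=H_{i'}'$; since these derived subgroups are generated by commutators of different pairs of generators, they are distinct coordinate lines of $G'$, so $K_i'\ne K_{i'}'$ and hence $K_i\ne K_{i'}$. This gives the required two $\mathcal{A}_2$-subgroups. The step I expect to carry the weight is the middle one: the careful bookkeeping that matches $H_i\cap G'$ with the correct $2\times2$ principal minor, and the verification (via Lemma~\ref{l1} and Lemma~\ref{l3}) that each $K_i$ is genuinely an $\mathcal{A}_2$-group and not something larger. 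By contrast the distinctness of the two subgroups is immediate, since it is witnessed already by their different derived subgroups.
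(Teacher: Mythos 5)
Your proposal is correct and follows essentially the same route as the paper's proof: identify $H_i\cap G'=\langle [a_j,a_k],\,a_j^{p^{m_j}},\,a_k^{p^{m_k}}\rangle$ via the basis property of $\overline{a_1},\overline{a_2},\overline{a_3}$, observe that a vanishing principal minor forces $G'\not\le H_i$, and apply Lemma~\ref{l3}(1) to get $H_i\times\langle w\rangle\in\mathcal{A}_2$ for a central $w\in G'\setminus H_i$. The only cosmetic differences are that the paper fixes the two vanishing minors without loss of generality rather than treating the index $i$ uniformly, and your distinctness argument via the derived subgroups $K_i'\ne K_{i'}'$ makes explicit what the paper dismisses with ``Clearly.''
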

\begin{proof}
Without loss of generality assume ${\scriptsize\left|
\begin{array}{cc}
\omega_{11}& \omega_{12}\\
 \omega_{21}& \omega_{22}\\ \end{array} \right|}=$
 ${\scriptsize\left|
\begin{array}{cc}
\omega_{22}& \omega_{23}\\
 \omega_{32}& \omega_{33}\\ \end{array} \right|}=0$.
Let $H=\lg a_1,a_2\rg$. Notice that $G'\le \Phi(G)\le Z(G)$.  We have $H'=\lg [a_1,a_2]\rg$. Moreover, $|H'|=p$. Then $H\in \mathcal{A}_1$ by Lemma \ref{l1}.
We claim $H\cap G'=\langle a_1^{p^{m_1}}, a_2^{p^{m_2}}, z\rangle$. In fact,
take $g\in H\cap G'$. Then $g=a_1^ia_2^jz^k$. Moreover, $\overline{1}=\overline{g}=\overline{a_1}^i\overline{a_2}^j$.
It follows from $\lg \overline{a_1},~\overline{a_2},~\overline{a_3}\rg$ being a basis of $\overline{G}$ that $\overline{a_1}^i=\overline{a_2}^j=\overline{1}$. Hence $p^{m_1}|i$ and $p^{m_2}|j$. Thus $g\in \langle a_1^{p^{m_1}}, a_2^{p^{m_2}}, z\rangle$. That means $H\cap G'\leq\langle a_1^{p^{m_1}}, a_2^{p^{m_2}}, z\rangle$. Obviously, $\langle a_1^{p^{m_1}}, a_2^{p^{m_2}}, z\rangle\leq H\cap G'$. Thus $H\cap G'=\langle a_1^{p^{m_1}}, a_2^{p^{m_2}}, z\rangle$.
Since ${\scriptsize\left|
\begin{array}{cc}
\omega_{11}& \omega_{12}\\
 \omega_{21}& \omega_{22}\\ \end{array} \right|}=0$,
 $|\langle a_1^{p^{m_1}}, a_2^{p^{m_2}}, z\rangle|\leqslant p^2$.
Since $G'\cong {\rm C}_p^3$, there exists $c\in G'\setminus H$. By Lemma \ref{l3}$(1)$, $H\times  \langle c\rangle$ is an $\mathcal{A}_2$-subgroup of $G$.
Similarly, there exists $d\in G'\setminus \langle a_2,a_3\rangle$ such that $\langle a_2,a_3\rangle\times  \langle d\rangle$ is also an $\mathcal{A}_2$-subgroup of $G$.
Clearly, $\lg a_1,a_2\rg \times \lg c\rg\ne \langle a_2,a_3\rangle\times  \langle d\rangle$. Thus $G$ has at least two $\mathcal{A}_2$-subgroups.
\end{proof}

In following we classify the $p$-groups $G$ with a unique $\mathcal{A}_2$-subgroup $K$ in two cases: $K\nleq \Phi(G)$ and $K\leq \Phi(G)$.

\medskip

If $K\nleq \Phi(G)$ and $|G'|\geqslant p^2$, then, by Lemma \ref{zl31},
$G$ has an $\mathcal{A}_1$-subgroup of index $p$. Fortunately, such $p$-groups have been classified up to isomorphism in \cite{Z4,Z2,Z1,Z3,Z5}.
The classification results are listed in \cite[Theorem 7.1]{Z4}, \cite[Theorem 4.12]{Z2}, \cite[Lemma 2.12, Corollary 3.5, Theorem 3.8, 3.9]{Z1},  \cite[Theorem 3.1, 4.1, 5.1, 6.1, 7.1, 7.6]{Z3} and \cite[Theorem 5.1, 5.3]{Z5}. In this case, what we need to do is to pick out the groups with a unique $\mathcal{A}_2$-subgroup from the groups listed in the theorems mentioned above. Although there are a lot of the $p$-groups with an $\mathcal{A}_1$-subgroup of index $p$, the process of checking can be greatly reduced by Corollary \ref{c1} and Lemma \ref{juzhe}. For the remaining cases, the process of checking is simple but tedious. The details are omitted.
Now we list the groups with $K\nleq \Phi(G)$ and $|G'|\geqslant p^2$ as follows. In particular, we also list some information of these groups, which are used in the proof of Theorem \ref{zd22}.

\begin{thm}\label{zd31}
Let $G$ be a finite $p$-group with $|G'|\geqslant p^2$. Then $G$ has a unique $\mathcal{A}_2$-subgroup $K$ and $K\nleq \Phi(G)$ if and only if $G$ is one of the following non-isomorphic groups$:$
\begin{itemize}
  \item[$(1)$] $\langle a, b, c \di a^{4}=b^{4}=c^{2}=1,~[b,c]=1,~[c,a]=b^{2},~[a,b]=a^{2} \rangle;$ moreover, $G\in \mathcal{A}_3,~|G|=2^5$, $K=\langle ab,c\rangle\times\langle a^2\rangle\cong {\rm D}_8\times {\rm C}_2$.

  \item[$(2)$] $\langle a, b, c,x \di a^{4}=b^{2}=x^2=1,~[a,b]=x,~[a,c]=a^2=c^2,~[b,c]=[x,a]=[x,b]=[x,c]=1 \rangle;$ moreover, $G\in \mathcal{A}_3,~|G|=2^5$, $K=\langle a,c\rangle\times\langle x\rangle\cong {\rm Q}_8\times {\rm C}_2$.

  \item[$(3)$] $\langle a, b, c \di a^{8}=c^{2}=1,~ b^2=a^{4},~[a,b]=c,~[c,a]=b^2,~[c,b]=1 \rangle;$ moreover,  $G\in \mathcal{A}_3,~|G|=2^5$, $K=\langle a^2,b\rangle\times\langle c\rangle\cong {\rm Q}_8\times {\rm C}_2$.

  \item[$(4)$] $\langle a, b, c \di a^{8}=c^4=1,~b^2=a^4,~[a,b]=c,~[c,a]=a^4,~[c,b]=c^2\rangle;$ moreover,  $G\in \mathcal{A}_3,~|G|=2^6$, $K=\langle a^2,b,c\rangle\cong{\rm M}_2(2,2,1).{\rm C}_2$.

  \item[$(5)$] $\langle a, b, c \di a^{2^{n+1}}=b^{2}=c^2=1,~[a,b]=c,~[c,a]=a^{2^n},~[c,b]=1 \rangle$, where $n\geqslant 2;$ moreover, $G\in \mathcal{A}_3,~|G|=2^{n+3}$, $K=\langle a^2,b\rangle\times\langle c\rangle\cong{\rm M}_2(n,1)\times {\rm C}_2$.

  \item[$(6)$] $\langle a, b, c, d \di a^{2^n}=b^{2}=c^2=d^2=1,~[a,b]=c,~[c,a]=d,~[c,b]=[d,a]=[d,b]=1 \rangle$, where $n\geqslant 2;$
 moreover,    $G\in \mathcal{A}_3,~|G|=2^{n+3}$, $K=\langle a^2,b\rangle\times\langle c\rangle\cong{\rm M}_2(n-1,1,1)\times {\rm C}_2$.

  \item[$(7)$] $\langle a, b, c \di a^{2^n}=b^{4}=c^{2}=1,~[a,b]=c,~[c,a]=b^2,~[c,b]=1 \rangle$, where $n\geqslant 3;$
moreover,   $G\in \mathcal{A}_3,~|G|=2^{n+3}$, $K=\langle b,a^2\rangle\times\langle c\rangle\cong{\rm M}_2(2,n-1)\times {\rm C}_2$.

  \item[$(8)$] $\langle a,b,c\di a^{8}=b^{4}=c^{4}=1,~[b,c]=a^{4},~[c,a]=b^2,~[a,b]=b^2c^2\rangle;$
moreover,    $G\in \mathcal{A}_3,~|G|=2^7$, $K=\langle b,c\rangle\ast\langle a^2\rangle\cong {\rm M}_2(2,2,1)\ast{\rm C}_4$.

  \item[$(9)$] $\langle a, b, c \di a^{4}=b^{4}=c^{4}=1,~[a,b]=c,~[c,a]=b^2,~[c,b]=c^2\rangle;$
 moreover,   $G\in \mathcal{A}_4,~|G|=2^6$, $K=\langle c,a^2b\rangle\times\langle b^2\rangle\cong{\rm Q}_8\times {\rm C}_2$.

  \item[$(10)$] $\langle a, b,c \di b^2=c^4=1,~a^{2^n}=c^2,~[a,b]=c,~[c,a]=[c,b]=c^2\rangle$, where $n \geqslant 2;$
 moreover,   $G\in \mathcal{A}_{n+1},~|G|=2^{n+3}$, $K=\langle c,b\rangle\ast\langle a^{2^{n-1}}\rangle\cong{\rm D}_8\ast {\rm C}_4$.

  \item[$(11)$] $\langle a, b,c\di a^{2^n}=b^2=c^4=1, ~[a,b]=c, ~[c,a]=[c,b]=c^2 \rangle$, where $n\geqslant 2;$
moreover,  $G\in \mathcal{A}_{n+1},~|G|=2^{n+3}$, $K=\langle c,b\rangle\times\langle a^{2^{n-1}}\rangle\cong{\rm D}_8\times {\rm C}_2$.

  \item[$(12)$] $\langle a, b,c\di a^{2^n}=c^4=1, ~b^2=c^2,~[a,b]=c,~[c,a]=[c,b]=c^2\rangle$, where $n\geqslant 2;$
moreover,  $G\in \mathcal{A}_{n+1},~|G|=2^{n+3}$, $K=\langle c,b\rangle\times\langle a^{2^{n-1}}\rangle\cong{\rm Q}_8\times {\rm C}_2$.

  \item[$(13)$] $\langle a, b, c \di a^{4}=b^{2^{n+1}}=c^{4}=1,~[b,c]=1,~[c,a]=a^2=c^2, ~[a,b]=b^{2^n} \rangle$, where $n\geqslant2;$
moreover,   $G\in \mathcal{A}_{n+2},~|G|=2^{n+4}$, $K=\langle a,c\rangle\times \langle b^{2^n}\rangle\cong {\rm Q}_8\times {\rm C}_2$.

  \item[$(14)$] $\langle a, b, c\di a^{2^{n+1}}=b^4=1,~c^2=b^2,~[a,b]=c,~[c,a]=a^{2^n},~[c,b]=c^2\rangle$, where $n\geqslant 3;$
  moreover,    $G\in \mathcal{A}_{n+2},~|G|=2^{n+4}$, $K=\langle c,b\rangle\times\langle a^{2^{n}}\rangle\cong {\rm Q}_8\times {\rm C}_2$.

  \item[$(15)$] $\langle a, b, c,d\di a^{9}=c^{3}=d^3=1, ~b^3=a^3,~[a,b]=c,~[c,a]=d,~[c,b]=a^{3},~[d,a]=[d,b]=1\rangle$;
  moreover,    $G\in \mathcal{A}_3,~|G|=3^5$, $K=\langle b,c\rangle\times\langle d\rangle\cong{\rm M}_3(2,1)\times {\rm C}_3$.

  \item[$(16)$] $\langle a, b, c,d \di a^{9}=b^{3}=c^3=d^3=1,~[a,b]=c,~[c,a]=d,~[c,b]=a^{-3},~[d,a]=[d,b]=1\rangle;$
moreover,    $G\in \mathcal{A}_3,~|G|=3^5$, $K=\langle c,b\rangle\times\langle d\rangle\cong{\rm M}_3(1,1,1)\times {\rm C}_3$.

  \item[$(17)$] $\langle a, b, c,d \di a^{p}=b^{p^{2}}=c^{p}=d^p=1,~[a,b]=c,~[c,a]=b^{\nu p},~[c,b]=d,~[d,a]=[d,b]=1\rangle$, where $p>3$, $\nu=1$ or a fixed quadratic non-residue modular $p;$
          moreover,    $G\in \mathcal{A}_3,~|G|=p^5$, $K=\langle a,c\rangle\times\langle d\rangle\cong{\rm M}_p(1,1,1)\times {\rm C}_p$.

  \item[$(18)$] $\langle a, b, c \di a^{p}=b^{p^2}=c^{p^2}=1,~[b,c]=1,~[c,a]=b^{p}c^{p},~[a,b]=b^{-p}\rangle$, where $p\geqslant3$;
      moreover,     $G\in \mathcal{A}_3,~|G|=p^5$, $K=\langle b,a\rangle\times\langle c^p\rangle\cong{\rm M}_p(2,1)\times {\rm C}_p$.

  \item[$(19)$] $\langle a, b, c \di a^{p^2}=b^{p^2}=c^{p}=1,~[a,b]=c,~[c,a]=a^{p}b^{\nu p},~[c,b]=b^{p}\rangle$, where $p>3, ~\nu=1$ or a fixed quadratic non-residue modular $p;$
        moreover,    $G\in \mathcal{A}_3,~|G|=p^5$, $K=\langle b,c\rangle\times\langle a^p\rangle\cong{\rm M}_p(2,1)\times {\rm C}_p$.

  \item[$(20)$] $\langle a, b, c \di a^{p}=b^{p^{3}}=c^{p^{2}}=1,~[b,c]=1,~[c,a]=b^{p^2},~ [a,b]=c^{\nu p} \rangle$, where $\nu=1$ or a fixed square non-residue modulo $p;$
        moreover,     $G\in \mathcal{A}_3,~|G|=p^6$, $K=\langle c,a\rangle\ast\langle b^p\rangle\cong{\rm M}_p(2,1,1)\ast {\rm C}_{p^2}$.

  \item[$(21)$] $\langle a, b, c \di a^{p^3}=b^{p^{2}}=c^p=1,~[a,b]=c,[c,a]=b^{\nu_1p},[c,b]=a^{-\nu_2 p^2}\rangle$, where $p\geqslant3$, $\nu_1,\nu_2=1$ or a fixed quadratic non-residue modular $p;$
   moreover,    $G\in \mathcal{A}_3,~|G|=p^6$, $K=\langle c,b\rangle\ast\langle a^p\rangle\cong{\rm M}_p(2,1,1)\ast {\rm C}_{p^2}$.

  \item[$(22)$] $\langle a, b, c \di a^{p^2}=b^{p^{2}}=c^{p^2}=1,~[b,c]=a^p,[c,a]=c^{-p},[a,b]=b^pc^{\nu p}\rangle$, where $p\geqslant3$, $\nu=1$ or a fixed quadratic non-residue modular $p$, $-\nu$ is a fixed quadratic non-residue modular $p;$
     moreover,        $G\in \mathcal{A}_3,~|G|=p^6$, $K=\langle c,a\rangle\times\langle b^p\rangle\cong{\rm M}_p(2,2)\times {\rm C}_{p}$.

  \item[$(23)$] $\langle a, b, c \di a^{p^3}=b^{p^{2}}=c^{p^2}=1,~[b,c]=a^{p^2},[c,a]=b^{p},[a,b]=c^{\nu p}\rangle$, where $p\geqslant3$, $\nu=1$ or a fixed quadratic non-residue modular $p$, $-\nu$ is a fixed quadratic non-residue modular $p;$
    moreover,        $G\in \mathcal{A}_3,~|G|=p^7$, $K=\langle b,c\rangle\ast\langle a^p\rangle\cong{\rm M}_p(2,2,1)\ast {\rm C}_{p^2}$.

  \item[$(24)$] $\langle a, b, c \di a^{p^3}=b^{p^{2}}=c^{p^2}=1,~[b,c]=a^{p^2},[c,a]^{1+r}=b^{rp}c^{-p},[a,b]^{1+r}=b^pc^{p}\rangle$, where $p\geqslant3$, $r=1,2,\cdots, p-2$ and $-r$ is a fixed quadratic non-residue modular $p;$
    moreover,        $G\in \mathcal{A}_3,~|G|=p^7$, $K=\langle b,c\rangle\ast\langle a^p\rangle\cong{\rm M}_p(2,2,1)\ast {\rm C}_{p^2}$.

  \item[$(25)$] $\langle a, b, c \di a^{p^{n+1}}=b^{p}=c^p=1,~[a,b]=c,[c,a]=1,[c,b]=a^{\nu p^n} \rangle$, where $p\geqslant3$ and $n\geqslant2$, $\nu=1$ or a fixed quadratic non-residue modular $p$;
moreover,            $G\in \mathcal{A}_{n+1},~|G|=p^{n+3}$, $K=\langle c,b\rangle\ast\langle a^{p^{n-1}}\rangle\cong{\rm M}_p(1,1,1)\ast {\rm C}_{p^2}$.

  \item[$(26)$] $\langle a, b, c \di a^{p^{n}}=b^{p^{2}}=c^p=1,~[a,b]=c,~[c,a]=1,~[c,b]=b^{p} \rangle$, where $p\geqslant3$ and $n\geqslant2;$
 moreover,     $G\in \mathcal{A}_{n+1},~|G|=p^{n+3}$, $K=\langle b,c\rangle\times\langle a^{p^{n-1}}\rangle\cong{\rm M}_p(2,1)\times {\rm C}_{p}$.

  \item[$(27)$] $\langle a, b, c,d \di a^{p^{n}}=b^{p}=c^p=d^p=1,~[a,b]=c,~[c,a]=1,~[c,b]=d, ~[d,a]=[d,b]=1 \rangle$, where $p\geqslant3$ and $n\geqslant2;$
    moreover,     $G\in \mathcal{A}_{n+1},~|G|=p^{n+3}$, $K=\langle c,b\rangle\times\langle a^{p^{n-1}}\rangle\cong{\rm M}_p(1,1,1)\times {\rm C}_{p}$.

  \item[$(28)$] $\langle a, b, c \di a^{p^2}=b^{p}=c^{p^{n+1}}=1,~[b,c]=1,~[c,a]=c^{p^n},~[a,b]=a^{p}\rangle$, where $p\geqslant3$;
moreover,     $G\in \mathcal{A}_{n+2},~|G|=p^{n+4}$, $K=\langle a,b\rangle\times\langle c^{p^{n}}\rangle\cong{\rm M}_p(2,1)\times {\rm C}_{p}$.
  \end{itemize}
\end{thm}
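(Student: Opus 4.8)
The plan is to convert the statement into a sieve on a classification that already exists. Under the standing hypotheses $|G'|\geqslant p^2$, a unique $\mathcal{A}_2$-subgroup $K$, and $K\nleq\Phi(G)$, Lemma~\ref{zl31} shows that $G$ possesses an $\mathcal{A}_1$-subgroup of index $p$. Such groups are classified up to isomorphism in \cite{Z4,Z2,Z1,Z3,Z5}, so for the forward direction it suffices to pass through each family in those papers and retain exactly the groups with $\alpha_2(G)=1$. For the backward direction I would verify, for each of the $28$ listed groups, that the exhibited subgroup $K$ is the unique $\mathcal{A}_2$-subgroup and that $K\nleq\Phi(G)$, together with the stated order, the $\mathcal{A}_t$-type of $G$, and the isomorphism type of $K$.

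I would run the sieve by splitting on the number of $\mathcal{A}_1$-subgroups of index $p$, which is at least one by Lemma~\ref{zl31}. If that number is exactly one, Corollary~\ref{c1} is decisive: it forces $p=2$, $d(G)=2$, and the presence of an abelian maximal subgroup, so this branch contains only two-generator $2$-groups, a short list one inspects by hand. Every remaining group---in particular every group with $p$ odd, by the contrapositive of Corollary~\ref{c1}---has at least two $\mathcal{A}_1$-subgroups of index $p$. Among these, whenever $d(G)=3$ with $\Phi(G)\leqslant Z(G)$ and $G'\cong{\rm C}_p^3$, Lemma~\ref{juzhe} discards any group whose matrix $\omega(G)$ has two vanishing principal minors, since such a group already carries two distinct $\mathcal{A}_2$-subgroups; thus only those with at most one vanishing principal minor remain to be examined directly, while the low-rank odd-$p$ cases are checked by hand.

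To decide $\alpha_2(G)=1$ for a surviving candidate I would enumerate its $\mathcal{A}_2$-subgroups using the classification of $\mathcal{A}_2$-groups from \cite{A2}. In the families at hand these are overwhelmingly of product type, $K\cong M\times{\rm C}_p$ or $K\cong M\ast{\rm C}_{p^2}$ with $M$ minimal non-abelian, recognizable through Lemma~\ref{l3}; counting them then reduces to counting the ways an $\mathcal{A}_1$-subgroup $H$ can be enlarged by a cyclic factor inside $G$, a count controlled by $|H\cap G'|$ and the available cyclic complements in $Z(G)$ or $G'$. This is precisely the mechanism underlying Lemma~\ref{juzhe}, and I would run the same bookkeeping uniformly across the families, allowing for the occasional genuinely two-generator $\mathcal{A}_2$-subgroup such as the one in item $(4)$. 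The backward direction inverts this computation: Lemma~\ref{l3} certifies the $\mathcal{A}_t$-type of each listed $G$ and of its subgroup $K$, while $K\nleq\Phi(G)$ is read off from a displayed generator of $K$ lying outside $\Phi(G)$.

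The main obstacle is the scale and delicacy of the enumeration rather than any single conceptual step: the cited classification comprises many families, and in each one must establish $\alpha_2(G)=1$, not merely $\alpha_2(G)\geqslant1$. The subtlest cases are those in which $K$ is a central product---items $(8)$, $(10)$, $(20)$, $(21)$, $(23)$, $(24)$, $(25)$---where one must track $K\cap G'$ and the central cyclic factor carefully to rule out a second enlargement of a minimal non-abelian subgroup that would also be an $\mathcal{A}_2$-subgroup. Corollary~\ref{c1} and Lemma~\ref{juzhe} eliminate the bulk of the candidates, but the residual case analysis, though elementary, must still be carried out family by family to secure uniqueness.
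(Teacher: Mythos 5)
Your proposal follows essentially the same route as the paper: invoke Lemma~\ref{zl31} to place $G$ in the known classification of $p$-groups with an $\mathcal{A}_1$-subgroup of index $p$ from \cite{Z4,Z2,Z1,Z3,Z5}, prune the candidates with Corollary~\ref{c1} and Lemma~\ref{juzhe}, and finish with a family-by-family check of $\alpha_2(G)=1$ (which the paper likewise leaves as a tedious verification with details omitted). Your extra remarks on counting $\mathcal{A}_2$-subgroups of product type via Lemma~\ref{l3} are a reasonable elaboration of that omitted checking, not a departure from the paper's method.
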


\begin{thm}\label{zd22}
Let $G$ be a finite $p$-group with $|G'|\geqslant p^2$. Then $G$ has a unique $\mathcal{A}_2$-subgroup $K$ and $K\leq \Phi(G)$ if and only if
$G=\langle a,b,c,d\di a^4=b^4=c^2=d^2=1,~[b,a]=c,~[c,a]=d,~[d,a]=b^2,~[c,b]=[d,b]=[d,c]=1\rangle$.
\end{thm}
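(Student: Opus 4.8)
The plan is to treat the two implications separately, with the forward direction carrying all the weight. For the easy direction ($\Leftarrow$) I would take the displayed group $G$ and compute its basic invariants by hand: from $[b,a]=c$, $[c,a]=d$, $[d,a]=b^2$ and the vanishing commutators one gets the lower central series $\gamma_2=\langle c,d,b^2\rangle$, $\gamma_3=\langle d,b^2\rangle$, $\gamma_4=\langle b^2\rangle$, $\gamma_5=1$, so $c(G)=4$, $G'=\langle c,d,b^2\rangle\cong {\rm C}_2^3$ (hence $|G'|=2^3\ge p^2$), $d(G)=2$ and $|G|=2^6$. Then $\Phi(G)=G'\mho_1(G)=\langle a^2,b^2,c,d\rangle$ has order $2^4$, and a direct check of the induced commutators (the only nontrivial one being $[a^2,c]=b^2$) identifies $\Phi(G)=\langle a^2,c\rangle\times\langle d\rangle\cong {\rm D}_8\times {\rm C}_2$, which is an $\mathcal{A}_2$-group by Lemma \ref{l3}(1). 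It remains to verify $\alpha_2(G)=1$: any $\mathcal{A}_2$-subgroup has order $\ge 2^4$; subgroups of order $2^5$ and $2^6$ contain $\Phi(G)$ and hence properly contain the $\mathcal{A}_2$-group $\Phi(G)$, so they lie in $\mathcal{A}_{\ge 3}$, while an explicit inspection of the index-$p^2$ subgroups shows $\Phi(G)$ is the only $\mathcal{A}_2$-subgroup of order $2^4$. Setting $K=\Phi(G)$ finishes this direction.

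For the forward direction I would first record the structural consequences of the hypotheses. Since $K$ is the unique $\mathcal{A}_2$-subgroup it is characteristic, so $K\trianglelefteq G$; as $K\le\Phi(G)$ lies in every maximal subgroup and $K$ is non-abelian, $G$ has no abelian maximal subgroup, and by Lemma \ref{zl31} it has no $\mathcal{A}_1$-subgroup of index $p$; thus every maximal subgroup is an $\mathcal{A}_{\ge 2}$-group and therefore contains the unique $\mathcal{A}_2$-subgroup $K$. Lemma \ref{l18} gives that $Z(K)$ is non-cyclic and Lemma \ref{l17} that $K$ has a $G$-invariant subgroup of type $(p,p)$. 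The central tool is a diagonal argument controlling $C:=C_G(K)$: if $C\not\le K$ then, since $C\trianglelefteq G$ and $CK/K\cong C/Z(K)\ne 1$ is normal in $G/K$, one produces $w\in C\setminus K$ with $w^p\in Z(K)$, and the central product $K\langle w\rangle$ contains, besides $K$, a second copy of $K$ obtained by twisting along a nontrivial homomorphism $K\to\langle w\rangle$, contradicting $\alpha_2(G)=1$. Hence $C_G(K)=Z(K)\le K$ and $G/Z(K)\hookrightarrow\Aut K$.

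The heart of the argument is then to pin down $K$, $p$ and the way $G$ sits over $K$, for which I would show $K=\Phi(G)$ and $d(G)=2$. The case $d(G)\ge 3$ should be excluded by reducing, after passing to a suitable central quotient, to the situation $\Phi(G)\le Z(G)$, $G'\cong {\rm C}_p^3$ covered by Lemma \ref{juzhe}, whose minor condition then forces a second $\mathcal{A}_2$-subgroup. Once $K=\Phi(G)$ and $d(G)=2$ are known, each of the $p+1$ maximal subgroups $M$ contains $K$ with index $p$ and satisfies $K\not\le\Phi(M)$ — otherwise $\Phi(M)\le\Phi(G)=K$ forces $\Phi(M)=K$ and hence $M$ cyclic — so $M$ has a unique $\mathcal{A}_2$-subgroup outside its Frattini subgroup and is one of the groups of Theorem \ref{zd31} (or of the form $H\ast C$ of Theorem \ref{zd1} when $|M'|=p$). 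Cross-referencing that list under the constraints that all maximal subgroups share the same $K$ and that $K=\Phi(G)$ singles out $p=2$ and $K\cong {\rm D}_8\times {\rm C}_2$, whence $|G|=2^6$.

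It then remains to determine the extension $1\to K\to G\to {\rm C}_2^2\to 1$: writing $G=\langle a,b\rangle$ and imposing the compatibility relations among the actions of $a$ and $b$ on $K\cong {\rm D}_8\times {\rm C}_2$ leaves only finitely many candidates, among which a direct count of $\mathcal{A}_2$-subgroups retains exactly the displayed presentation. The main obstacle is precisely this cluster of reductions: proving $K=\Phi(G)$ (ruling out $K<\Phi(G)$, where Theorem \ref{zd1} applied to $\Phi(G)$ and the induced structure of $\Phi(G)$ must be invoked) and $d(G)=2$, then forcing $p=2$ and $K\cong {\rm D}_8\times {\rm C}_2$. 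The diagonal/central-product argument disposes of $C_G(K)$ cleanly in the split case $w^p=1$ but needs extra care when $w$ is forced to have order $p^2$, and the final extension bookkeeping, although elementary, is exactly where Lemma \ref{juzhe} and the isomorphism type of $K$ recorded with each entry of Theorem \ref{zd31} do the real work.
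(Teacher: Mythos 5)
Your $(\Leftarrow)$ direction is essentially the paper's argument and is fine. The forward direction, however, has a genuine gap at the step you call your central tool. The claim that $C_G(K)\not\le K$ produces a second $\mathcal{A}_2$-subgroup inside $K\langle w\rangle$ is not merely in need of ``extra care'' in the non-split case --- it is false there. Take $K=\langle r,s\rangle\times\langle t\rangle\cong {\rm D}_8\times {\rm C}_2$ and let $w$ centralize $K$ with $w^2=t$, so $w$ has order $4$ and $\langle w\rangle\cap K=\langle t\rangle$. Then $K\langle w\rangle\cong {\rm D}_8\times {\rm C}_4$, whose seven maximal subgroups are: $K$ itself, three abelian groups, and three $\mathcal{A}_1$-groups (of types ${\rm M}_2(2,2)$ and ${\rm M}_2(2,1,1)$); since any $\mathcal{A}_2$-subgroup has order at least $2^4$, $K$ is the \emph{unique} $\mathcal{A}_2$-subgroup of $K\langle w\rangle$. (Indeed ${\rm D}_8\times {\rm C}_4$ is one of the groups $H\ast C$ of Theorem \ref{zd1}, listed precisely because it has a unique $\mathcal{A}_2$-subgroup.) So no diagonal twist, and in fact no argument local to $K\langle w\rangle$, can yield the contradiction; one would have to invoke the global hypotheses $K\le\Phi(G)$ and $|G'|\ge p^2$, and you give no mechanism for doing so. Since $G/Z(K)\hookrightarrow\Aut K$ is what your later bookkeeping leans on, this is not a repairable detail but a missing idea.

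The other two pillars are asserted rather than proved, and they are exactly where the paper spends its effort. Excluding $d(G)\ge 3$ via Lemma \ref{juzhe} requires producing a central quotient with $\Phi\le Z$, derived subgroup ${\rm C}_p^3$ \emph{and} two vanishing principal minors, and then transporting ``at least two $\mathcal{A}_2$-subgroups'' back to $G$; none of this is supplied, and note that uniqueness of the $\mathcal{A}_2$-subgroup passes to a quotient $\overline{G}$ only when the image $\overline{K}$ is itself still an $\mathcal{A}_2$-group, a point the paper verifies case by case (in the paper, Lemma \ref{juzhe} is only a screening device for compiling Theorem \ref{zd31}, not an ingredient of Theorem \ref{zd22}). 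Likewise, the ``cross-referencing'' that is to single out $p=2$, $K\cong{\rm D}_8\times{\rm C}_2$ and $|G|=2^6$ is the entire content of the paper's proof: for $p=2$ the induction is anchored on a Magma search of all $2$-groups of order $<2^8$, and a minimal counterexample of order $\ge 2^8$ is killed type by type (group $(5)$ via Lemma \ref{l17} and a centralizer argument, groups $(6)$--$(7)$ via the quotient by $\mho_2(K)$, the remaining types via quotients collapsing part of $Z(K)$ together with Lemma \ref{l18}); for $p>2$ a separate argument of the same kind shows no group exists at all. Your outline compresses all of this into one sentence, and in addition takes on an extension enumeration over $\Aut({\rm D}_8\times{\rm C}_2)$ that the paper never needs, because the exceptional group of order $2^6$ is found by the computer search rather than constructed by hand. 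As it stands, the proposal is a plausible table of contents, but the steps that would constitute the proof are either missing or, in the centralizer step, incorrect as stated.
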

\begin{proof} $(\Leftarrow)$
In fact, $G=A\rtimes_\varphi \lg a\rg$, where $A=\lg b\rg\times\lg c\rg\times\lg d\rg\cong {\rm C}_4\times {\rm C}_2^2$, $a^\varphi=\a$:
$b^\a=bc$, $c^\a=cd$, $d^\a=db^2$ and $\a\in \Aut(A)$. Now we have $|G|=2^6$ and  $\Phi(G)=\lg a^2,c\rg\times \lg d\rg\cong {\rm D}_8\times {\rm C}_2$ is an  $\A_2$-subgroup of $G$. It follows that $G$ is an $\A_4$-group and all maximal subgroups of $G$ are $\A_3$-groups.
Hence the $\A_2$-subgroups of $G$ must have order $16$. It is easy to verify $G$ satisfies the hypothesis of the theorem by  calculations.

\smallskip
$(\Rightarrow)$ We discuss in two cases: $p=2$ and $p>2$.

{\bf Case 1.} $p=2$

Using Magma \cite{magma} to check the SmallGroups Database \cite{shujuku}, we know that if $|G|<2^8$, then $G$ is exactly the group in the theorem.
Assume $|G|\geqslant2^8$. We will prove there is no group satisfying the hypothesis in the theorem  by counterexample.

Assume that $G$ is a counterexample of minimal order.
Let $M$ be a maximal subgroup of $G$. We assert that $K\nleq \Phi(M)$. If not, since $K$ is a unique $\mathcal{A}_2$-subgroup of $G$, $K$ is a unique $\mathcal{A}_2$-subgroup of $M$. By Corollary \ref{zc1}, $|M'|\geqslant 2^2$. Thus $M$ satisfies the conditions of the theorem.
By the minimality of $G$, $M$ satisfies the conclusion of the theorem. Thus $M$ is the group in the theorem. That is, $|M|=2^6$. This contradicts $|G|\geqslant2^8$. Thus the assert holds. Now we have $M$ is one of the groups in Theorem \ref{zd1} and Theorem \ref{zd31}.
By Lemma \ref{l18}, $Z(K)$ is non-cyclic. Thus $M$ is one of the groups in Theorem \ref{zd1} with $H\cap C=1$ or the groups $(5-8)$ and $(11-14)$ in Theorem \ref{zd31}.

We assert that $M$ cannot be the group $(5)$ in Theorem \ref{zd31}. If not, then $|M|=2^{n+3}$ and $K=\langle a^2,b,c\rangle\cong {\rm M}_2(n,1)\times {\rm C}_2$. It follows from $|M|\geqslant 2^7$ that $n\geqslant4$.
Since $K$ is a unique $\mathcal{A}_2$-subgroup of $G$ and $K\leq \Phi(G)$, $K$ is a unique $\mathcal{A}_2$-subgroup of any maximal subgroup $T$ of $G$.
Same as $M$, $T$ is one of the groups in Theorem \ref{zd1} with $H\cap C=1$ or the groups $(5-8)$ and $(11-14)$ in Theorem \ref{zd31}.
Now $T$ and $K$ are known. From this we know only the $\mathcal{A}_2$-subgroup of the group (5) is isomorphic to ${\rm M}_2(n,1)\times {\rm C}_2$, where $n\geqslant4$.  Thus $T$ is isomorphic to the group $(5)$. By calculation, $Z(T)$ is cyclic.
On the other hand, since $K$ is a unique $\mathcal{A}_2$-subgroup of $G$, $K\unlhd G$. By $K\le \Phi(G)$ and Lemma \ref{l17},
there exists  $N\unlhd G$ and $N\le K$ such that $N\cong {\rm C}_2\times {\rm C}_2$. Notice that $G/C_G(N)\lesssim \Aut(N)$. We have $|G:C_G(N)|\leqslant 2$. Thus there exists a maximal subgroup $M_1$ of $G$ such that $N\le M_1\le C_G(N)$.
Moreover, $N\leq Z(M_1)$ and hence $Z(M_1)$ is non-cyclic. This contradicts $Z(T)$ is cyclic for any maximal subgroup $T$ of $G$.

We assert that $M$ cannot be the groups $(6)$ and $(7)$ in Theorem \ref{zd31}. If not, then $K=\langle a^2,b,c\rangle$ and $|M:K|=2$. By calculation, $\mho_2(K)=\langle a^8\rangle$ and $|K:\mho_2(K)|=2^5$.
Let $\overline{G}=G/\mho_2(K)$. Then $|\overline{G}|=2^7$.
Since $K\in \mathcal{A}_2$, $\overline{K}\in \mathcal{A}_i(i\leqslant 2)$.
Clearly, $|\overline{K}'|=2$ and $d(\overline{K})=3$. Moreover, $\overline{K}\in \mathcal{A}_2$.
We assert that $\overline{K}$ is a unique $\mathcal{A}_2$-subgroup of $\overline{G}$.
If not, let $\overline{A}$ be an $\mathcal{A}_2$-subgroup of $\overline{G}$ and $\overline{A}\ne \overline{K}$.
It follows from $\overline{A}\in \mathcal{A}_2$ that $A\in \mathcal{A}_t(t\geqslant2)$.
Since $K$ is a unique  $\mathcal{A}_2$-subgroup of $G$, $K\leq A$. Moreover, $\overline{K}\leq \overline{A}$.
This contradicts that $\overline{A}$ and $\overline{K}$ are two different $\mathcal{A}_2$-subgroups of $\overline{G}$.
Thus $\overline{K}$ is a unique $\mathcal{A}_2$-subgroup of $\overline{G}$.
Since ${K}\le {\Phi(G)}$, $\overline{K}\le \overline{\Phi(G)}=\Phi(\overline{G})$. By Corollary \ref{zc1}, $|\overline{G}'|\geqslant p^2$. Thus $\overline{G}$ satisfies the conditions of the theorem.
By the minimality of $G$, $\overline{G}$ satisfies the conclusion of the theorem. Thus $\overline{G}$ is the group in the theorem. That is, $|\overline{G}|=2^6$. This contradicts $|\overline{G}|=2^7$.

If $M$ is one of the groups in Theorem \ref{zd1} with $H\cap C=1$ or the groups $(8)$ and $(11-14)$ in Theorem \ref{zd31}, then $Z(K)\leq Z(M)$ and $\mho_1(K)\leq Z(M)$.
$K$ is isomorphic to one of the following groups$:$
\begin{itemize}
  \item[(i)] $H\times \langle x\rangle$, where $H\cong {\rm Q_8}$ or ${\rm D}_8$, $|\langle x\rangle|=2;$
  \item[(ii)] $H\ast \langle x\rangle$, where $H\cong {\rm M}_2(2,2,1)$,~$|\langle x\rangle|=4$ and $H\cap \lg x\rg=H'$.
\end{itemize}

Same  as $M$,
$Z(K)\leq Z(L)$ and $\mho_1(K)\leq Z(L)$ for any maximal subgroup $L$ of $G$. It follows that $Z(K)\le Z(G)$ and $\mho_1(K)\leq Z(G)$.

If $K$ is isomorphic to the group (i), then, let $\og=G/\langle x\rangle$. Thus $\overline{K}\cong H$.
If $K$ is isomorphic to the group (ii), then, let $\og=G/\mho_1(H)$. Thus $\overline{K}\cong {\rm M}_p(1,1,1)\ast \langle x\rangle$. In either case, we have $Z(\overline{K})$ is cyclic and $\overline{K}$ is non-cyclic. On the other hand, since ${K}\le {\Phi(G)}$,  $\overline{K}\le \overline{\Phi(G)}=\Phi(\overline{G})$. Now it follows from Lemma \ref{l18} that $\overline{K}$ is cyclic. This is a contradiction.

To sum up, the counterexample of minimal order does not exist. The conclusion follows.

{\bf Case 2.} $p>2$

In this case, we prove that there is no group satisfying the conditions in Theorem \ref{zd22}. Let $L$ be a subgroup of minimum order of $G$ such that $K\leq \Phi(L)$. Then $K\nleq \Phi(M)$ for any maximal subgroup $M$ of $L$. Hence $M$ is one of the groups in Theorem \ref{zd1} and Theorem \ref{zd31}.
By Lemma \ref{l18}, $Z(K)$ is non-cyclic. Thus $M$ is one of the groups in Theorem \ref{zd1} with $H\cap C=1$ or the groups $(15-24)$ and $(26-28)$ in Theorem \ref{zd31}. Now we have $Z(K)\leq Z(M)$ and $\mho_1(K)\leq Z(M)$. It follows that $Z(K)\le Z(L)$ and $\mho_1(K)\leq Z(L)$. It is easy to see that $K$ is isomorphic to one of the following groups$:$
\begin{itemize}
  \item[(i)] $H\times \langle x\rangle$, where $H\cong {\rm M}_p(1,1,1)$ or ${\rm M}_p(2,1)$ and $|\langle x\rangle|=p;$
  \item[(ii)] $H\ast \langle x\rangle$, where $H\cong {\rm M}_p(2,1,1)$ or ${\rm M}_p(2,2,1)$, $|\langle x\rangle|=p^2$ and $H\cap \lg x\rg=H';$
  \item[(iii)] $\langle c,a\rangle\times\langle b^p\rangle\cong {\rm M}_p(2,2)\times {\rm C}_p$.
\end{itemize}

If $K$ is isomorphic to the group (i), then, let $\overline{L}=L/\langle x\rangle$. Thus $\overline{K}\cong H$. If $K$ is isomorphic to the group (ii),
then, let $\overline{L}=L/\mho_1(H)$. Thus $\overline{K}\cong {\rm M}_p(1,1,1)\ast \langle x\rangle$. If $K$ is isomorphic to the group (iii), then, let $\overline{L}=L/\langle a^p,b^p\rangle$. Thus $\overline{K}\cong {\rm M}_p(2,1)$. In either case, we have $Z(\overline{K})$ is cyclic and $\overline{K}$ is non-cyclic. On the other hand, since ${K}\le {\Phi(L)}$,  $\overline{K}\le \overline{\Phi(L)}=\Phi(\overline{L})$. Now it follows from Lemma \ref{l18} that $\overline{K}$ is cyclic. This is a contradiction.
\end{proof}


\begin{thebibliography}{99}

\bibitem{Z4}
L.J. An, R.F. Hu and Q.H. Zhang, Finite $p$-groups with a minimal non-abelian subgroup of index $p$ (IV), {\it J. Algebra Appl.}, {\bf 14}(2), 1550020 (54 pages), (2015).

\bibitem{Z2}
L.J. An, L.L. Li, H.P. Qu and Q.H. Zhang, Finite $p$-groups with a minimal non-abelian subgroup of index $p$ (II), {\it Sci. China Ser. A}, {\bf 57}(4), 737--753, (2014).



\bibitem{BJI}
Y. Berkovich, Groups of Prime Power Order Vol.1, Berlin-New York: {\it Walter de Gruyter}, 2008.

\bibitem{B2}
Y. Berkovich and Z. Janko, Groups of Prime Power Order Vol.2, Berlin: {\it Walter de Gruyter}, 2008.

\bibitem{At}
Y. Berkovich and Z. Janko, Structure of finite $p$-groups with given subgroups, {\it Contemp. Math.}, {\bf 402}, 13--93, (2006).

\bibitem{BerZhang}
Y. Berkovich and Q.H. Zhang, On $\mathcal{A}_1$- and $\mathcal{A}_2$-subgroups of finite $p$-groups,  {\it J. Algebra Appl.}, {\bf 13}(2), 1350095 (26 pages), (2014).


\bibitem{shujuku}
H.U. Besche, B. Eick and E.A. OBrien, A millennium project: Constructing small groups, {\it Internat. J. Algebra Comput.}, {\bf 12}(5), 623--644, (2002).

\bibitem{magma}
W. Bosma, J. Cannon and C. Playoust, The MAGMA algebra system I: The user language, {\it J. Symbolic Comput.}, {\bf 24}(3--4), 235--265, (1997).


\bibitem{A1} L. R\'edei, Das schiefe Product in der Gruppentheorie,
{\it Comment. Math. Helv.}, {\bf20}, 225--267, (1947).



\bibitem{Z1}
H.P. Qu, S.S. Yang, M.Y. Xu and L.J. An, Finite $p$-groups with a minimal non-abelian subgroup of
index $p$ (I), {\it J. Algebra}, {\bf358}, 178--188, (2012).


\bibitem{Z3}
H.P. Qu, M.Y. Xu and L.J. An,  Finite $p$-groups with a minimal non-abelian subgroup of index $p$ (III), {\it Sci. China Ser. A}, {\bf 58}(4), 763--780, (2015).


\bibitem{Z5}
H.P. Qu, L.P. Zhao, J. Gao and L.J. An, Finite $p$-groups with a minimal non-abelian subgroup of index $p$ (V), {\it J. Algebra Appl.},  {\bf 13}(7), 1450032 (35 pages), (2014).


\bibitem{Zhang2}
M.Y. Xu, L.J. An and Q.H. Zhang, Finite $p$-groups all of whose non-abelian proper subgroups are generated by two elements,  {\it J. Algebra}, {\bf319}(9), 3603--3620, (2008).


\bibitem{Zhang DD}
D.D. Zhang, H.P. Qu and Y.F. Luo, Finite $p$-groups $G$ with $H'=G'$ for each $\mathcal{A}_{2}$-subgroup $H$, {\it Algebra Colloq.}, accepted.


\bibitem{ZZ}
L.H. Zhang and J.Q. Zhang, Finite $p$-groups all of whose $\mathcal{A}_2$-subgroups are generated by two elements,  {\it J. Group Theory},  {\bf24}(1), 177--193, (2021).


\bibitem{A2}
Q.H. Zhang, X.J. Sun, L.J. An and M.Y. Xu, Finite $p$-groups all of whose subgroups of index $p^2$ are abelian, {\it Algebra Colloq.}, {\bf 15}(1), 167--180, (2008).


\bibitem{A3}
Q.H. Zhang, L.B. Zhao, M.M. Li and Y.Q. Shen, Finite $p$-groups all of whose subgroups of index $p^3$ are abelian, {\it Commun. Math. Stat.}, {\bf3}(1), 69--162, (2015).





%
%
%

\end{thebibliography}
\end{document}